\renewcommand{\leq}{\leqslant}
\renewcommand{\le}{\leqslant}
\renewcommand{\geq}{\geqslant}
\renewcommand{\ge}{\geqslant}
\renewcommand{\mathcal}{\mathscr}
\theoremstyle{plain}
\newtheorem{theorem}{Theorem}[section]
\newtheorem{corollary}[theorem]{Corollary}
\newtheorem{conjecture}[theorem]{Conjecture}
\theoremstyle{definition}
\newtheorem{definition}[theorem]{Definition}
\newtheorem{remark}[theorem]{Remark}
\numberwithin{equation}{section}
\newcommand{\R}{\mathbb{R}}
\newcommand{\e}{\varepsilon}
\newcommand{\abs}[1]{\left\lvert#1\right\rvert}
\newcommand{\norm}[1]{\|{#1}\|}
\begin{document}
	
\title{(Non)local $\Gamma$-convergence}

\author[Serena Dipierro]{Serena Dipierro}
\address{S.D., Department of Mathematics and Statistics,
	University of Western Australia,
	35 Stirling Highway, WA6009 Crawley, Australia}
\email{serena.dipierro@uwa.edu.au}

\author[Pietro Miraglio]{Pietro Miraglio}
\address{P.M., Dipartimento di Matematica, Universit\`a degli Studi di Milano, Via Cesare Saldini 50,
	20133 Milan, Italy, Departament de Matem\`{a}tica Aplicada I,
	Universitat Polit\`{e}cnica de Catalunya, Diagonal 647, 08028 Barcelona, Spain}
\email{pietro.miraglio@unimi.it}

\author[Enrico Valdinoci]{Enrico Valdinoci}
\address{E.V., Department of Mathematics and Statistics,
	University of Western Australia,
	35 Stirling Highway, WA6009 Crawley, Australia}
\email{enrico.valdinoci@uwa.edu.au}

\begin{abstract}
We present some long-range interaction models for phase coexistence
which have recently appeared in the literature, recalling also their relation
to classical interface and capillarity problems. In this note, the main focus
will be on the $\Gamma$-convergence methods, emphasizing similarities
and differences between the classical theory and the new trends of investigation.

In doing so, we also obtain some new, more precise $\Gamma$-convergence
results in terms of ``interior'' and ``exterior'' contributions. We also
discuss the structural differences between $\Gamma$-limits
and ``pointwise'' limits, especially concerning the ``boundary terms''.
\end{abstract}

\maketitle

\section{Introduction}

The goal of this note is to present and discuss some recent developments
in the mathematical analysis
of phase separation models, with special attention to some problems described
in terms of long-range particle interactions, and exploiting methods
and techniques related to the classical
notion of $\Gamma$-convergence.
\medskip

In 1975 De Giorgi and Franzoni~\cite{DG,DGF} introduced the notion of $\Gamma$-convergence as a new type of convergence for functionals, particularly suitable for the study of variational problems. 
This new tool quickly became popular in the calculus of variations, as it allows one
to relate a sequence of minimization problems depending on a parameter (that can be discrete or continuous) with a limit problem, that can
possibly have a different nature from the original problems, in terms
of energy functionals,
functional spaces, physical modelization, etc.
In spite of the structural differences
between the original functionals and the limit one, this kind of convergence preserves
the notion of
minimizers in the limit, hence suggesting some relations between the limit problem and the sequence of functionals taken into account.
\medskip
 
We now recall one of the possible definitions of $\Gamma$-convergence,
referring to the monographs~\cite{B,DM} for a complete
introduction to the subject of $\Gamma$-convergence and for all
the equivalent definitions of this notion.

Given a family of functionals $F_j$ defined on the function spaces $X_j$, we are interested in the minimization problems
\[
\min\left\{F_j(u) : \,\,u\in X_j	\right\},
\]
depending on a parameter $j$, and we want to relate this sequence with a limit problem, of the form
\[
\min\left\{F(u) : \,\,u\in X	\right\}.
\] 
\begin{definition}\label{def_gamma}
	We say that $F_j$ converges in the $\Gamma$-sense to $F$ if the two following conditions are satisfied:
\begin{itemize}
	\item[(i)] for every $u\in X$ and every sequence~$u_j$ converging\footnote{Here and in the following we take $X$ such that $X_j\subseteq X$ for every $j$ and we define $F_j\equiv+\infty$ in~$X\setminus X_j$.} to $u$ in $X$, it holds that
	\begin{equation*}
	\liminf_{j\to\infty} F_j(u_j)\geq F(u);
	\end{equation*}
	\item[(ii)] for every $u\in X$ there exists a sequence~$u_j\in X$ converging to $u$ in $X$ such that
	\begin{equation*}
	\limsup_{j\to\infty} F_j(u_j)\leq F(u).
	\end{equation*}
\end{itemize}
\end{definition}

These two conditions can be understood by analogy with the direct
method of the calculus of variations, keeping in mind that here
we have a sequence of functionals instead of a single one.
Indeed, on the one hand
condition $(\textnormal{i})$ plays the role of the lower
semicontinuity, providing a lower bound for the sequence of minimizers.
On the other hand, condition~$(\textnormal{ii})$ is an upper bound that ensures the
optimality of the limit functional $F$ among all the ones satisfying condition $(\textnormal{i})$. Assuming that an equi-coerciveness condition is satisfied by the sequence of functionals $F_j$, a minimizing sequence $(\overline{u}_j)$ for the family $F_j$ converges to a function $\overline{u}\in X$. Whenever $F_j$ satisfies also $(\textnormal{i})$ and $(\textnormal{ii})$, we then have that
\begin{itemize}
	\item there exists a minimizer $\overline{u}$ of the limit functional $F$ defined on $X$;
	\item the sequence of minimizers $\overline{u}_j$ of $F_j$ converges in $X$ to $\overline{u}$;
	\item the sequence of minima $F_j(\overline{u}_j)$ converges to $F(\overline{u})$.
\end{itemize}
These three properties make the $\Gamma$-convergence a very useful tool in the study of minimum problems arising in the calculus of variations. 

In particular, given an energy functional depending on a parameter, we can relate it to a new minimum problem by taking its $\Gamma$-limit for the parameter going to infinity. This limit problem contains somehow the relevant features of the original one, as its minimizers are the limits of sequences of
minimizers of the original variational problem. In this way, through the study of the minimizers of the limit functional, one can recover some important information about the original problem. 
\medskip

The rest of this note is organized as follows. In the forthcoming
Section~\ref{SEZ2} we recall one of the first examples
of $\Gamma$-convergence, also motivated by the theory
of phase coexistence. Then, in Section~\ref{GABmwe}
we discuss some capillarity problems focused at detecting
suitable boundary effects.

In Section~\ref{JNujHHSN09393} we present
some long-range interaction models
describing nonlocal phase separation, nonlocal
capillarity and water waves problems. In this section
we also provide some new results about the ``interior''
and ``exterior'' $\Gamma$-convergence of nonlocal energy
functionals. Finally, in Section~\ref{JA:AKMZ09wjjjdjd}
we briefly recall some $\Gamma$-convergence results
in the fractional parameter and we compare the
notions of $\Gamma$-convergence and ``pointwise''
limits, stressing important differences with respect to the
boundary contributions obtained via these two alternative
approaches.

\section{$\Gamma$-convergence results for the classical phase
coexistence energy functional}\label{SEZ2}

A paradigmatic example of $\Gamma$-convergence 
is provided by some classical results for
the Allen-Cahn, or Cahn-Hilliard, energy functional,
which models the separation of the two phases of a fluid in a container.

In 1958, Cahn and Hilliard~\cite{CH} proposed a new model for a
two-phase fluid in a container, in which the phase transition occurs
continuously in a thin layer, instead of discontinuously along an interface.  
The model is closely related to the
minimization of the Helmholtz
free energy in a liquid-gas system, as originally
proposed by J. D. van der Waals~\cite{WAALS} --- see also~\cite{ALLEN}. 

In this
model, one assumes that the configurations of the fluid in a container $\Omega\subset\R^3$ are described by a mass density $u$ that takes values in $[-1,1]$, the pure phases being $A:=\{u=-1\}$ and $B:=\{u=1\}$. Then, the energy associated with the configuration of the fluid is the sum of a potential term, in which a nonnegative
double-well\footnote{
We say that a function $W:\R\to[0,+\infty)$ is a ``double-well'' with zeros in $\pm1$ if it satisfies
\[
W\in C^2(\R),\,\,W(\pm1)=0,\,\,W>0\,\text{in}\,\,\R\setminus\{\pm1\},\,\,W'(\pm1)=0,\,\,W''(\pm1)>0.
\]
} function~$W$ vanishing at $-1$ and $1$ appears, and a Dirichlet term, that
penalizes the transitions from one phase to the other. That is, the energy associated to a configuration $u$ is
\begin{equation}\label{energy_mm}
\widetilde F_\e(u,\Omega):=\e^2\int_\Omega \abs{\nabla u(x)}^2\,dx+\int_\Omega W(u(x))\,dx,
\end{equation}
with the parameter $\e$ being representative of the thickness of the layer where the phase transition occurs. In particular, since this length is supposed to be much smaller than the size of the container $\Omega$, it is interesting to study the asymptotic behavior of the configuration, i.e., its limit as $\e\to0^+$. 
\medskip

This type of analysis was initiated
in the sense of $\Gamma$-convergence 
by Modica and Mortola~\cite{M,MM}, who considered a suitable rescaling of the energy $\widetilde F_\e$.
Namely, they took into account the functional
$$ F_\e(u,\Omega):=\frac1\e\,\widetilde F_\e(u,\Omega)=\e
\int_\Omega \abs{\nabla u(x)}^2\,dx+\frac1\e\int_\Omega W(u(x))\,dx,$$
and proved that it $\Gamma$-converges to 
\begin{equation}\label{energy_limit}
F(u,\Omega):=
\begin{cases}
\begin{aligned}
&c_*\,\textnormal{Per}(E,\Omega) \qquad &\text{if}\,\,u_{|\Omega}=\chi_E-\chi_{\mathcal{C}E},\,\,\text{for some set}\,\,E\subset\Omega,
\\
&+\infty\qquad&\text{otherwise},
\end{aligned}
\end{cases}
\end{equation}
where~$c_*>0$ is a normalization
constant depending only on $n$ and $W$,
and~$\textnormal{Per}(\cdot,\Omega)$ represents the perimeter functional inside the set $\Omega$. 

As a consequence, as $\e\to 0^+$,
the minimizers of the functional $F_\e$ converge to the
minimal surfaces, i.e., the minimizers of the perimeter functional. 
We refer to the books~\cite{G, MAGGI} for a complete introduction to the theory of minimal surfaces
and the notion of perimeter. 
\medskip

In particular, the theory of~$\Gamma$-convergence
of phase transitions to minimal surfaces has a geometric counterpart
in the convergence of the level sets of the minimizers of~$F_\e$. More precisely,
as established in~\cite{CORDO}, if~$u$ is a minimizer of
\[
F_1(u,\Omega):=\int_\Omega \abs{\nabla u(x)}^2\,dx+\int_\Omega W(u(x))\,dx,
\]
and~$u_\e(x):=u(x/\e)$, then, up to a subsequence,
for every~$\vartheta\in(0,1)$, the set~$\{u_\e\in(-\vartheta,\vartheta)\}$
converges locally uniformly as $\e\to0^+$ to~$\partial E$, being~$E$ a local minimizer
of the perimeter functional. That is, for any~$R > 0$ and any~$\delta> 0$
there exists~$\e_0\in (0, 1)$, possibly depending on~$R$ and~$\delta$,
such that, if~$\e\in(0,\e_0]$ then
\begin{equation}\label{COCP}\{u_\e\in(-\vartheta,\vartheta)\}\cap B_R\;
\subseteq\;
\bigcup_{p\in\partial E}B_\delta(p).\end{equation}
The proof of~\eqref{COCP} in~\cite{CORDO}
relies on suitable energy and density estimates.
More specifically, it is proved in~\cite{CORDO}
that if~$u$ is a minimizer of~$F_1$ in~$B_{R+1}$
with~$R>1$, then
\begin{equation}\label{COCP2}
F_1 (u,B_R)\le CR^{n-1},
\end{equation}
for some constant $C>0$. Also, if~$\vartheta_1$, $\vartheta_2\in(-1,1)$
and~$ u$ is a minimizer of~$F_1$ in~$B_R$ with~$u(0)>\vartheta_1$,
then there exist~$R_o(\vartheta_1,\vartheta_2)>1$ and~$c_o>0$ such
that, for all~$R\ge R_o(\vartheta_1,\vartheta_2)$,
\begin{equation}\label{COCP3} |\{u>\vartheta_2\}\cap B_R|\ge c_o\, R^n.\end{equation}
That is, according to~\eqref{COCP2},
the energy of the minimizers ``mostly
arise from a codimension $1$ interface'', and, in light of~\eqref{COCP3},
unless the solution at a given point (say the origin) is very close to
a pure phase, we have that the two phases in a large ball occupy
a measure which is comparable to the one of the ball itself (i.e., no phase gets lost,
at least in a measure theoretic sense).
\medskip

A very strong connection between phase transition models and minimal
surfaces is highlighted
by a celebrated conjecture of E. De Giorgi~\cite{CONG}	
about the rigidity properties
of monotone solutions to the Allen-Cahn equation $\Delta u = W'(u)$
in $\R^n$,
which can be formulated as follows:

\begin{conjecture} \label{CDG}
Let~$u\in C^2(\R^n)\cap L^\infty(\R^n)$ be
a solution of
$$ \Delta u(x) = W'(u(x))\qquad{\mbox{ for all }}x\in\R^n,$$
and assume also that
$$ \frac{\partial u}{\partial x_n}(x) >0\qquad{\mbox{ for all }}x\in\R^n.$$
Then, is it true that $u$ depends only on one Euclidean variable
(i.e., there exist~$u_0:\R\to\R$ and~$\omega\in S^{n-1}$
such that~$u(x)=u_0(\omega\cdot x)$ for all $x\in\R^n$),
at least if~$n\le8$?
\end{conjecture}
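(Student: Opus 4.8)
This is a celebrated conjecture, at present known in full only for $n\le3$ and, under a mild additional hypothesis, for $4\le n\le8$; I outline the strategy behind these results. Two preliminary facts are used throughout. First, \emph{monotonicity implies stability}: differentiating the equation along $x_i$ gives $\Delta(\partial_i u)=W''(u)\,\partial_i u$, so each partial derivative solves the linearized equation $\mathcal{L}\varphi:=-\Delta\varphi+W''(u)\varphi=0$; since $\partial_n u>0$ everywhere, $\mathcal{L}$ admits a positive solution, and a standard integration by parts (writing $\varphi=\psi\,\partial_n u$) shows that
\[
\int_{\R^n}\big(\abs{\nabla\varphi}^2+W''(u)\,\varphi^2\big)\,dx=\int_{\R^n}(\partial_n u)^2\,\abs{\nabla\psi}^2\,dx\ge0\qquad\text{for all }\varphi\in C^\infty_c(\R^n).
\]
Second, a monotone solution lies in a foliation by its own translates, hence is a local minimizer of the Allen-Cahn energy, so the estimate~\eqref{COCP2} applies and gives $\int_{B_R}\big(\abs{\nabla u}^2+W(u)\big)\,dx\le C R^{n-1}$.

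\textbf{Dimensions $n\le3$.} Insert $\varphi=\eta\,\partial_i u$ into the stability inequality, use $\mathcal{L}(\partial_i u)=0$, sum over $i$, and invoke the Sternberg-Zumbrun geometric identity; this yields a geometric Poincar\'e inequality of the form
\[
\int_{B_R}\abs{\nabla u}^2\Big(\abs{A}^2+\big|\nabla_{\!T}\log\abs{\nabla u}\big|^2\Big)\,dx\le C\int_{\R^n}\abs{\nabla u}^2\,\abs{\nabla\eta}^2\,dx,
\]
where $A$ is the second fundamental form of the level sets of $u$ and $\nabla_{\!T}$ the gradient tangential to them. With a standard cutoff ($\eta\equiv1$ on $B_R$, $\abs{\nabla\eta}\le C/R$) the right-hand side is, by the energy estimate, at most $CR^{n-3}$: for $n=2$ it vanishes as $R\to\infty$, so $A\equiv0$ and $u$ is one-dimensional; for $n=3$ one uses a logarithmic cutoff, whose weighted Dirichlet energy decays like $C/\log R\to0$, and concludes likewise (alternatively, via a Liouville theorem for the quotients $\partial_i u/\partial_n u$).

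\textbf{Dimensions $4\le n\le8$.} Here one adds the natural asymptotics $\lim_{x_n\to\pm\infty}u(x',x_n)=\pm1$. By the sliding method, this together with monotonicity upgrades $u$ to a \emph{global} minimizer of the energy. Rescaling $u_\e(x):=u(x/\e)$ and combining the $\Gamma$-convergence of Modica-Mortola with the uniform density estimates in the spirit of~\eqref{COCP3}, the level sets $\{u_\e\in(-\vartheta,\vartheta)\}$ converge locally uniformly, as in~\eqref{COCP}, to $\partial E$ with $E$ a global minimizer of the perimeter; the monotonicity forces $\partial E$ to be the graph of a function over $\R^{n-1}$, hence an entire minimal graph. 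By the Bernstein-type theorem of Bombieri-De Giorgi-Giusti such a graph is a hyperplane whenever $n\le8$. Savin's improvement-of-flatness theorem then propagates this flatness from the blow-down back to $u$, so that $\{u=0\}$ is a hyperplane; the uniqueness of the one-dimensional heteroclinic profile gives that $u$ depends on a single variable.

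\textbf{Main obstacle.} The genuinely hard point is the case $4\le n\le8$ \emph{without} the limit assumption: it is not known that monotonicity by itself forces the minimality property, equivalently that the blow-down is a minimizing graph rather than a singular minimizing cone. This is sharp: the counterexample of del Pino-Kowalczyk-Wei produces a monotone, non-one-dimensional solution in $\R^n$ for every $n\ge9$, and indeed $n=8$ is precisely the largest dimension in which the Bernstein problem has an affirmative answer, which is where the chain of implications above breaks down.
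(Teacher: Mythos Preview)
The paper does not prove this statement: it is De Giorgi's conjecture, presented there precisely as an open problem, and the paper contents itself with the sentence ``Conjecture~\ref{CDG} gave rise to several papers about the rigidity of the solutions to the Allen-Cahn equation. We refer to the survey~\cite{FV} for an introduction to this line of research.'' There is therefore no ``paper's own proof'' to compare your proposal against.

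That said, what you have written is not a proof of the conjecture but an accurate status report, and you are upfront about this from the first line. Your outline of the known results is essentially correct: the passage from monotonicity to stability via the positive Jacobi field $\partial_n u$; local minimality through the foliation by translates, giving access to the $R^{n-1}$ energy bound; the geometric Poincar\'e/Sternberg--Zumbrun approach (or, equivalently, the Liouville theorem for the ratios $\partial_i u/\partial_n u$) disposing of $n\le 3$; Savin's blow-down/improvement-of-flatness scheme under the extra limit hypothesis for $4\le n\le 8$; and the del~Pino--Kowalczyk--Wei counterexample for $n\ge 9$. You also correctly flag the genuine gap, namely that for $4\le n\le 8$ the conjecture remains open without the assumption $\lim_{x_n\to\pm\infty}u=\pm1$. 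So your submission is appropriate as a survey of partial results, but you should not present it as a proof of the statement, and there is nothing in the paper to benchmark it against beyond the reference to~\cite{FV}.
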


Conjecture~\ref{CDG} gave rise to several papers about the rigidity
of the solutions to the Allen-Cahn equation. We refer to the survey~\cite{FV}
for an introduction to this line of research.

\section{Boundary effects and capillarity problems}\label{GABmwe}
 
In the two-phase model in~\eqref{energy_mm}
the boundary contact energy is assumed to be
negligible, since the model mainly focuses on the formation of the 
phase interfaces inside the domain. 
In order to quantitatively
take into account the boundary effects of the domain
on the phase separation, several other models have been
designed.
\medskip

As a matter of fact, to understand the influence of boundary
effects, a classical model is the one describing ``capillarity'' phenomena
in a water-drop problem, in which
the boundary contact energy between the fluid and the 
wall becomes nonnegligible. 
In this case, the model considers
a liquid droplet of constrained mass occupying a small region~$E$ in a
container~$\Omega$, and the energy functional associated to~$E$ is of the form
\begin{equation}\label{local_capillarity}
G(E):=\textnormal{Per}(E,\Omega)+\sigma\,
\textnormal{Per}(E,\partial\Omega),
\end{equation}
where $\sigma\in[-1,1]$ is the
``relative adhesion coefficient'', that measures the liquid-wall tension with respect to the liquid-air tension. 
See Chapter~19 in~\cite{MAGGI} and the references therein for
a thorough presentation of classical droplet and capillarity problems.\medskip

The functional $G$ in~\eqref{local_capillarity}
shares some obvious similarities with
the functional~$F$ in~\eqref{energy_limit}, 
and therefore, in light of the discussion in Section~\ref{SEZ2},
it is natural to ask whether $G$ can be seen as the $\Gamma$-limit of some
modification of the
phase interface energy functional in~\eqref{energy_mm}. 

In~\cite{M1}, Modica established the $\Gamma$-convergence of the energy
\begin{equation}\label{boundary_en1}
G_\e(u,\Omega):=\e\int_\Omega \abs{\nabla u(x)}^2\,dx+\frac1\e\int_\Omega W(u(x)
)\,dx+\int_{\partial\Omega}V(u(x))\,d\mathcal{H}^{n-1}(x),
\end{equation} 
where $V$ is a nonnegative continuous function, not necessarily of double-well type,
and~$\mathcal{H}^{n-1}$ denotes the $(n-1)$-dimensional Hausdorff measure.
Specifically,
in~\cite{M1} it is proved that, for problems
with prescribed mass, $G_\e$ converges in the $\Gamma$-sense to
the capillarity energy functional $G$
in~\eqref{local_capillarity}.
The relative adhesion coefficient~$\sigma$ appearing in the $\Gamma$-limit~\eqref{local_capillarity} depends only on~$W$ and~$V$, and is explicitly computed in~\cite[Theorem~2.1]{M1}.
\medskip

A modification of the energy $G_\e$ defined in~\eqref{boundary_en1} was considered
in~\cite{ABS,ABS1} by Alberti, Bouchitté, and Seppecher, consisting of the energy functional
\begin{equation}\label{boundary_en_abs}
\mathcal{G}_\e(u,\Omega):=\e\int_\Omega \abs{\nabla u(x)}^2\,dx
+\frac1\e\int_\Omega W(u(x))\,dx+\lambda_\e\int_{\partial\Omega}V(
u(x))\,d\mathcal{H}^{n-1}.
\end{equation} 
Here, $W$ is still a double-well potential vanishing in $\pm1$, while $V$ --- contrary to~\eqref{boundary_en1} --- is a double-well potential vanishing in $\alpha$ and $\beta$, and $\lambda_\e$ is a parameter that goes to infinity when $\e\to0^+$, satisfying
\begin{equation}\label{kappa}
\lim_{\e\to0^+}\e\log\lambda_\e=k \qquad \text{with}\,\,k\in(0,+\infty).
\end{equation}
Under these assumptions --- which are different in the energy boundary term with respect to~\cite{M1} ---
it is established in~\cite{ABS}
that
the energy functional $\mathcal{G}_\e$ 
$\Gamma$-converges
to the limit energy
\begin{equation*}
\mathcal{G}(u):=
\begin{cases}
\begin{aligned}
&\inf\left\{\phi(u,v):\,\,v\in BV(\partial\Omega,\{\alpha;\beta\})	\right\}	\qquad 		&\text{if}\,\,\,u\in BV(\Omega,\{-1;1\}),
\\
&+\infty 		\qquad 		&\text{otherwise},
\end{aligned}
\end{cases}
\end{equation*}
where for every $u\in BV(\Omega,\{-1;1\})$ and $v\in BV(\partial\Omega,\{\alpha;\beta\})$ the function $\phi(u,v)$ is defined as
\begin{equation}\label{phi_funct}
\phi(u,v):=\mathcal{H}^{n-1}(Su)
+\sigma\int_{\partial\Omega}\abs{H(Tu)-H(v)}\,d\mathcal{H}^{n-1}
+c\,\mathcal{H}^{n-2}(Sv).
\end{equation}
Here, $Tu$ denotes the trace of $u$ on the boundary of $\Omega$, $H$ is the primitive function of~$2\sqrt{W}$, while the parameters $\sigma$ and $c$ depend only on $W$, $V$, and $k$, and are explicitly defined in~\cite{ABS}.

Also, in~\eqref{phi_funct}, with $Su$
we denote the
set of the points in which $u$ is essentially\footnote{One says that~$u$
is essentially continuous at a point~$x$
if
for every~$\varepsilon>0$ there exists~$\delta>0$
such that for almost all~$y$, $z\in B_\delta(x)$ one has that~$|f(y)-f(z)|<\varepsilon$.}
discontinuous. In this setting,
if $u\in BV(\Omega,\{-1;1\})$, then $\mathcal{H}^{n-1}(Su)$ is the measure of the interface between the pure phases $\{u=1\}$ and $\{u=-1\}$. It is well-known that a function $u$ that belongs to $\{-1;1\}$ almost everywhere has bounded variation if and only if the measure of the jump-set $Su$ is finite. 

Similarly, if $v\in BV(\partial\Omega,\{\alpha;\beta\})$, then $\mathcal{H}^{n-2}(Sv)$ denotes the $(n-2)$-dimensional measure of the interface between the boundary phases $\{v=\alpha\}$ and $\{v=\beta\}$. Finally, the second term in the definition \eqref{phi_funct} evaluates the energy of the transition from $Tu$ to $v$ that occurs on the boundary. 

The energy functional $\mathcal{G}$
is introduced in~\cite{ABS} as a relaxation\footnote{The relaxation procedure outlined in~\cite{ABS} is necessary as the capillarity functional $\mathcal{G}_\sharp$ in~\eqref{line tension}
is not semi-continuous, and this leads to minimum problems which are not well-posed.} of
a capillarity functional with line tension energy, which can be seen as a modification of the functional~$G$ defined in~\eqref{local_capillarity}. If we take $\alpha=-1$ and $\beta=1$, then the capillarity functional with line tension is \begin{equation}\label{line tension}
\mathcal{G}_\sharp(E):=\mathcal{H}^{n-1}(\Omega\cap\partial E)+\sigma\,\mathcal{H}^{n-1}(\partial\Omega\cap\partial E)+c\,\mathcal{H}^{n-2}\left((\overline{\partial E\cap\Omega})\cap\partial\Omega\right),
\end{equation}
where $E:=\{u(x)=1\}$. In the three-dimensional case, the so-called ``line tension energy'', which is the last term in~\eqref{line tension}, models an energy concentrated along the line $(\overline{\partial E\cap\Omega})\cap\partial\Omega$ where the interface liquid-air $\partial E\cap\Omega$ meets the boundary~$\partial\Omega$ of the container.
\medskip

The results in~\cite{ABS} have later been extended in~\cite{Gon} to the functional
\begin{equation}\label{MdM}\begin{split}
\mathcal{G}^a_\e(u,\Omega)
\,&:=\e^{1-a}\int_\Omega \abs{\nabla u(x)}^2h^a(x) \,dx+\frac1{\e^{1-a}}\int_\Omega W(u(x))h^{-a}(x)
\,dx
\\
&\hspace{2cm}+\lambda_\e\int_{\partial\Omega}V(u(x))\,d\mathcal{H}^{n-1}(x),\end{split}
\end{equation}
where $a\in(-1,0)$, $h:\Omega\to\R$ is the distance function to the boundary of $\Omega$, and $\lambda_\e\to+\infty$ as $\e\to0^+$ with some specific behavior, different from the one in \eqref{kappa}. More precisely, in~\cite{Gon} 
it is proved that the energy functional $\mathcal{G}^a_\e$ achieves the same $\Gamma$-limit for every $a\in(-1,0)$ as the one attained by $\mathcal{G}_\e$ defined in~\eqref{boundary_en_abs}.

\section{Local and nonlocal contributions in the $\Gamma$-limit}\label{JNujHHSN09393}

In this section, we
describe some phase separation models in which the interaction
energy is of nonlocal type.
For this, we start by presenting
the results in~\cite{ABS1}, focusing on the dimension~$n=1$. In~\cite{ABS1}, the authors 
consider an interval $I\subset\R$
and the energy
functional
\begin{equation}\label{abs1}
\mathcal{G}^1_\e(v):=\e\iint_{I\times I}\frac{\abs{v(x)-v(y)}^2}{\abs{x-y}^2}\,dx\,dy
+\lambda_\e\int_{I}W(v(x))\,dx,
\end{equation}
where $W$ is a double-well potential with zeros in $-1$ and $1$,
and $\lambda_\e$ is a positive parameter depending on $\e$ and satisfying~\eqref{kappa}. 

Then, the main result in~\cite{ABS1} establishes that the energy functional defined in~\eqref{abs1} $\Gamma$-converges in the $L^1$-topology to
\begin{equation}\label{abs1_limit}
\mathcal{G}^{1}(v):=
\begin{cases}
\begin{aligned}
&8k\,\mathcal{H}^0(Sv) \qquad &
\text{if}\,\,v\in BV(I,\{-1;1\}),
\\
&+\infty \qquad &\text{otherwise},
\end{aligned}
\end{cases}
\end{equation}
where $k$ is the one in~\eqref{kappa} and $Sv$ is the set of the points in which $v$ is essentially discontinuous.
Since we are assuming $n=1$, this simply means that at those points the function is discontinuous with the left-hand limit being different from the right-hand limit.

As customary, $\mathcal{H}^0$ denotes the 0-dimensional Hausdorff measure, corresponding to the ``counting measure''
(hence, $\mathcal{H}^0(Sv)$ is simply the ``number of jumps''
of the step function~$v$), and $BV(I,\{-1;1\})$ the space of the functions with bounded variation which are defined on $I\subset\R$ with values in $\{-1;1\}$ almost everywhere.
\medskip

In the context of the $\Gamma$-convergence of the
functional $\mathcal{G}_\e^a$ defined in~\eqref{MdM}, the study of the $\Gamma$-limit of an interaction
energy in dimension $n=1$
was addressed in~\cite{Gon} for $a\in(-1,0)$, corresponding to the fractional
parameter $s\in(1/2,1)$. Specifically, for an interval $I\subset\R$, in~\cite{Gon} the author considers the energy
\begin{equation}\label{AG}
\mathcal{G}^{1,a}_\e(v):=\e^{1-a}\iint_{I\times I
}\frac{\abs{v(x)-v(y)}^2}{\abs{x-y}^{1+2s}}\,dx\,dy+\lambda_\e\int_{I}V(v(x))\,dx,
\end{equation}
where $1-a=2s$, proving that it $\Gamma$-converges to $\mathcal{G}^1(v)$ defined in~\eqref{abs1_limit}.
\medskip

It is interesting to remark that the models presented in~\eqref{abs1}
and~\eqref{AG}, though of nonlocal nature, converge to a $\Gamma$-limit, namely
the one in~\eqref{abs1_limit}, which is local and classical.

In the following pages, we present other long-range interaction models for phase transitions and discuss their $\Gamma$-limits. Interestingly, the $\Gamma$-limits of the following functionals reduce to local limit problems for suitable ranges of a fractional parameter (corresponding to ``weakly nonlocal'' interactions), but conserves the nonlocal feature of the original problem for other ranges of this parameter (corresponding to ``strongly nonlocal'' interactions). Observe that the behavior in the strongly nonlocal regime represents a novelty with respect to the previous works~\cite{ABS1,Gon}, in which this range of parameters was not considered.

More precisely,
in~\cite{SVzero, SV, SVdue} Savin and the third author study the $\Gamma$-convergence,
as well as the geometric convergence of level sets of the minimizers,
for $\e\to0^+$ of a proper rescaling of the interaction energy
\begin{equation}\label{ENE} \begin{split}J_\e(u,\Omega)\,&:=
\e^{2s}\iint_{\Omega\times\Omega }\frac{\abs{u(x)-u(y)}^2}{\abs{x-y}^{1+2s}}\,dx\,dy
\\
&\hspace{1.5cm}+2\e^{2s}\int_{\Omega\times\mathcal{C}\Omega }\frac{\abs{u(x)-u(y)}^2}{
\abs{x-y}^{1+2s}}\,dx\,dy+\int_{\Omega}W(u(x))\,dx,
\end{split}\end{equation}
where $s$ is a parameter in $(0,1)$, $W$ a double-well potential, and $\Omega\subset\R^n$ a bounded domain whose complement is $\mathcal{C}\Omega:=\R^n\setminus\Omega$.

In order to describe the result in detail, we introduce the setting in~\cite{SV}. We let~$X:=\{u\in L^\infty(\R^n): \norm{u}_{L^\infty(\R^n)}\leq1\}$ and we say that a sequence~$u_j\in X$ converges to $u$ in $X$ if $u_j$ converges to $u$ in $L^1_\textnormal{loc}(\R^n)$.

The energy considered in~\cite{SV} can be seen as a suitable nonlocal analogue of the classical
model in~\eqref{energy_mm}. Indeed, in~\eqref{ENE} the classical Dirichlet-type energy is replaced by
a long-range interaction
energy consisting of the $\Omega$-contribution in the $H^s$-seminorm of~$u$. 
In the classical case, only local interactions
count in the Dirichlet energy, and the state of the fluid outside the container is not taken into account. 
In this new long-range setting, it is assumed that every particle interacts
with all the other ones,
inside and outside of the container, carrying a smaller contribution as the distance
between the particle increases (and the energy
functional in~\eqref{ENE} takes into account
all the particle interactions in which at least one of the particles
lies in the container).\medskip

In particular, we define the ``interior contribution'' as
\begin{equation*}
\mathcal{K}^{int}(u,\Omega):=\iint_{\Omega\times\Omega}\frac{\abs{u(x)-u(y)}^2}{\abs{x-y}^{n+2s}}\,dx\,dy,
\end{equation*}
and the ``exterior contribution'' as
\begin{equation*}
\mathcal{K}^{ext}(u,\Omega):=2\iint_{\Omega\times\mathcal{C}\Omega}\frac{\abs{u(x)-u(y)}^2}{\abs{x-y}^{n+2s}}\,dx\,dy.
\end{equation*}
Then, we set
\begin{equation*}
\mathcal{K}(u,\Omega):=\mathcal{K}^{int}(u,\Omega)+\mathcal{K}^{ext}(u,\Omega).
\end{equation*}
We observe that in this type
of energy functionals
we omit the contributions for $(x,y)\in\mathcal{C}\Omega\times\mathcal{C}\Omega$, since we are interested in variational problems in which all the admissible competitors are fixed outside of~$\Omega$.

Then, the energy in~\eqref{ENE} can be written as
\[
J_\e(u,\Omega)=\e^{2s}\mathcal{K}(u,\Omega)+\int_\Omega W(u(x))\,dx.
\]
In order to obtain a relevant $\Gamma$-limit, in~\cite{SV} a proper rescaling of the energy $J_\e$ is taken into account. In the present work, we make
this rescaling more explicit, by also
highlighting the different contributions coming from the interior and the exterior parts of the energy. For this, we define
\begin{equation}\label{local_part}
\mathcal{F}^{int}_\e(u,\Omega):=
\begin{cases}
\begin{aligned}
&\mathcal{K}^{int}(u,\Omega)+\frac{\e^{-2s}}{2}\int_\Omega W(u(x))\,dx
\quad &\text{if}\,\,\,s\in\bigg(0,\frac12\bigg),
\\
&\abs{\log\e}^{-1}\mathcal{K}^{int}(u,\Omega)+\frac{\abs{\e\log\e}^{-1}}{2}\int_\Omega W(u(x))\,dx
\quad &\text{if}\,\,\,s=\frac12,
\\
&\e^{2s-1}\mathcal{K}^{int}(u,\Omega)+\frac{\e^{-1}}{2}\int_\Omega W(u(x))\,dx
\quad &\text{if}\,\,\,s\in\bigg(\frac12,1\bigg);
\end{aligned}
\end{cases}
\end{equation}
and
\begin{equation}\label{nonlocal_part}
\mathcal{F}^{ext}_\e(u,\Omega):=
\begin{cases}
\begin{aligned}
&\mathcal{K}^{ext}(u,\Omega)+\frac{\e^{-2s}}{2}\int_\Omega W(u(x))\,dx
\quad &\text{if}\,\,\,s\in\bigg(0,\frac12\bigg),
\\
&\abs{\log\e}^{-1}\mathcal{K}^{ext}(u,\Omega)+\frac{\abs{\e\log\e}^{-1}}{2}\int_\Omega W(u(x))\,dx
\quad &\text{if}\,\,\,s=\frac12,
\\
&\e^{2s-1}\mathcal{K}^{ext}(u,\Omega)+\frac{\e^{-1}}{2}\int_\Omega W(u(x))\,dx
\quad &\text{if}\,\,\,s\in\bigg(\frac12,1\bigg).
\end{aligned}
\end{cases}
\end{equation}
The sum of $\mathcal{F}^{int}_\e$ and $\mathcal{F}^{ext}_\e$ is the object of the $\Gamma$-convergence result in~\cite{SV}, i.e.,
one defines
\begin{equation*}
\mathcal{F}_\e(u,\Omega):=\mathcal{F}^{int}_\e(u,\Omega)+\mathcal{F}^{ext}_\e(u,\Omega).
\end{equation*}

The $\e$-rescaling in the definitions of $\mathcal{F}^{int}_\e$ and $\mathcal{F}^{ext}_\e$ can be seen as a convenient one in order to obtain a significant $\Gamma$-limit. It is worth observing that for the case $s=1/2$ the $\e$-weights in the definitions of $\mathcal{F}^{int}_\e$ and $\mathcal{F}^{ext}_\e$ satisfy\footnote{
This follows from the fact that $$\lim_{\e\to0^+}\abs{{\log\e}}^{-1}\log\left(\e^{-1}\abs{\log\e}^{-1}\right)=1.$$
} the limit assumption~\eqref{kappa} with $k=1$ that is taken in~\cite{ABS1}.

In order to define the $\Gamma$-limit of the energy functionals studied in~\cite{SV},
we recall the notion of
fractional perimeter, as introduced
in~\cite{CRS}. Given two measurable and disjoint sets $E, F\subset\R^n$, one defines
\begin{equation*}
I_s(E,F):=\iint_{E\times F}\frac{dx\,dy}{\abs{x-y}^{n+2s}},
\end{equation*}
where $s\in(0,1/2)$.
Then, we define the ``interior contribution'' of the fractional perimeter as 
\begin{equation}\label{iwdjc:01}
\textnormal{Per}_s^{int}(E,\Omega):=I_s(E\cap\Omega,\Omega\setminus E)
\end{equation}
and the ``exterior contribution'' as
\begin{equation}\label{iwdjc:02}
\textnormal{Per}_s^{ext}(E,\Omega):=I_s(E\cap\mathcal{C}\Omega,\Omega\setminus E) +I_s(E\cap\Omega,\mathcal{C}\Omega\cap\mathcal{C}E).
\end{equation}
Finally, the full fractional perimeter of a set $E$ in $\Omega$ is defined as
\[
\textnormal{Per}_s(E,\Omega):=\textnormal{Per}_s^{int}(E,\Omega)+\textnormal{Per}_s^{ext}(E,\Omega).
\]
In this setting, the $\Gamma$-limit functional $\mathcal{F}$ in~\cite{SV} is as follows:
\begin{equation}\label{ogujkvmn}
\mathcal{F}(u,\Omega):=
\begin{cases}
\begin{aligned}
&\textnormal{Per}_s(E,\Omega) \qquad &\text{if}\,\,s\in\left(0,1/2\right)\,\,\,\text{and}\,\,\,u_{|\Omega}=\chi_E-\chi_{\mathcal{C}E},
\\
&c_*\,\textnormal{Per}(E,\Omega) \qquad &\text{if}\,\,s\in[1/2,1)\,\,\,\text{and}\,\,\,u_{|\Omega}=\chi_E-\chi_{\mathcal{C}E},
\\
&+\infty\qquad&\text{otherwise},
\end{aligned}
\end{cases}
\end{equation}
where $c_*$ is a constant depending only on $n$, $s$, and $W$, which is explicitly determined in~\cite{SV}.

For further reference, it is also convenient,
in the case $s\in(0,1/2)$, to
reformulate and extend
the $\Gamma$-convergence result in~\cite{SV} in terms
of ``interior'' and ``exterior'' limit functionals:

\begin{theorem}
	\label{thm_nonlocal_gamma}
	Let $s\in(0,1/2)$ and $\Omega\subset\R^n$ be a bounded domain. Then, 
	\begin{itemize}
	\item[(a)] $\mathcal{F}^{int}_\e$ $\Gamma$-converges to the interior contribution in the fractional perimeter, i.e., 
	\begin{equation*}
	\mathcal{F}^{int}(u,\Omega):=
	\begin{cases}
	\begin{aligned}
	&\textnormal{Per}_s^{int}(E,\Omega) \qquad &\text{if}\,\,u_{|\Omega}=\chi_E-\chi_{\mathcal{C}E},
	\\
	&+\infty\qquad&\text{otherwise}.
	\end{aligned}
	\end{cases}
	\end{equation*}
	\item[(b)]$\mathcal{F}^{ext}_\e$ $\Gamma$-converges to the exterior contribution in the fractional perimeter, i.e., 
	\begin{equation*}
	\mathcal{F}^{ext}(u,\Omega):=
	\begin{cases}
	\begin{aligned}
	&\textnormal{Per}_s^{ext}(E,\Omega) \qquad &\text{if}\,\,u_{|\Omega}=\chi_E-\chi_{\mathcal{C}E},
	\\
	&+\infty\qquad&\text{otherwise}.
	\end{aligned}
	\end{cases}
	\end{equation*}
	\item[(c)]$\mathcal{F}_\e$ $\Gamma$-converges to the functional $\mathcal{F}$
defined in~\eqref{ogujkvmn}.	\end{itemize}
\end{theorem}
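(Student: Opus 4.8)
The plan is to establish, for each of $\mathcal{F}^{int}_\e$, $\mathcal{F}^{ext}_\e$ and $\mathcal{F}_\e$, the two conditions of Definition~\ref{def_gamma}, the $\Gamma$-convergence being understood with respect to the convergence of $X$, i.e.\ in $L^1_{\textnormal{loc}}(\R^n)$ (which, $\Omega$ being bounded, restricts to $L^1(\Omega)$). The key remark, and the reason the range $s\in(0,1/2)$ behaves so differently from the classical Modica--Mortola case and from $s\in[1/2,1)$, is that here a $\{\pm1\}$-valued function already carries \emph{finite} interaction energy: splitting the domains of integration along $\partial E$ and using that the integrands of $\mathcal{K}^{int}$ and $\mathcal{K}^{ext}$ are equal to $\abs{1-(-1)}^2$ exactly across $\partial E$, one sees that $\mathcal{K}^{int}(\chi_E-\chi_{\mathcal{C}E},\Omega)$ and $\mathcal{K}^{ext}(\chi_E-\chi_{\mathcal{C}E},\Omega)$ are constant multiples of $\textnormal{Per}_s^{int}(E,\Omega)$ and of $\textnormal{Per}_s^{ext}(E,\Omega)$ respectively; we absorb this universal multiple into the normalization of the fractional perimeter, as in~\cite{SV}. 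In particular no diffuse transition profile is needed, and the $\Gamma$-limit will coincide with the pointwise limit of the functionals.

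For the $\limsup$ inequality, condition~$(\textnormal{ii})$: given $u\in X$, if $u_{|\Omega}$ is not of the form $\chi_E-\chi_{\mathcal{C}E}$ then the target functional equals $+\infty$ and any sequence works; otherwise I would take the \emph{constant} sequence $u_\e:=u$. Since $W\ge0$ and $W$ vanishes on $\{\pm1\}$, one has $\int_\Omega W(u)\,dx=0$, so every potential term in~\eqref{local_part}--\eqref{nonlocal_part} vanishes identically and, for every $\e$,
\[
\mathcal{F}^{int}_\e(u,\Omega)=\mathcal{K}^{int}(u,\Omega)=\textnormal{Per}_s^{int}(E,\Omega),\qquad
\mathcal{F}^{ext}_\e(u,\Omega)=\mathcal{K}^{ext}(u,\Omega)=\textnormal{Per}_s^{ext}(E,\Omega),
\]
and likewise $\mathcal{F}_\e(u,\Omega)=\mathcal{K}(u,\Omega)=\textnormal{Per}_s(E,\Omega)$; passing to the $\limsup$ (the inequalities being vacuous when the right-hand side is $+\infty$) gives $(\textnormal{ii})$ for all three parts, \emph{with one and the same sequence}.

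For the $\liminf$ inequality, condition~$(\textnormal{i})$: fix $u\in X$ and an arbitrary sequence $u_\e\to u$ in $L^1_{\textnormal{loc}}(\R^n)$, and argue for part~(a) — parts~(b) and~(c) being verbatim the same with $\mathcal{K}^{ext}$, $\textnormal{Per}_s^{ext}$, respectively $\mathcal{K}=\mathcal{K}^{int}+\mathcal{K}^{ext}$, $\textnormal{Per}_s=\textnormal{Per}_s^{int}+\textnormal{Per}_s^{ext}$, in place of $\mathcal{K}^{int}$, $\textnormal{Per}_s^{int}$. One may assume $L:=\liminf_{\e\to0^+}\mathcal{F}^{int}_\e(u_\e,\Omega)<+\infty$; pass to a subsequence realizing $L$ and, along it, to a further subsequence with $u_\e\to u$ a.e.\ in $\R^n$. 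From $\mathcal{K}^{int}\ge0$ and $\mathcal{F}^{int}_\e(u_\e,\Omega)\le L+1$ (for $\e$ small along that subsequence) one gets $\int_\Omega W(u_\e)\,dx\le C\e^{2s}\to0$, so Fatou's lemma (applied to $W(u_\e)\ge0$ with $W(u_\e)\to W(u)$ a.e.) forces $\int_\Omega W(u)\,dx=0$, i.e.\ $u_{|\Omega}=\chi_E-\chi_{\mathcal{C}E}$ with $E=\{u=1\}$. Dropping once more the nonnegative potential term and applying Fatou's lemma to the nonnegative integrand $\abs{u_\e(x)-u_\e(y)}^2\abs{x-y}^{-n-2s}$ on $\Omega\times\Omega$, which by the a.e.\ convergence of $u_\e$ converges a.e.\ to the corresponding integrand for $u$, one obtains
\[
L=\lim_{\e\to0^+}\mathcal{F}^{int}_\e(u_\e,\Omega)\ge\liminf_{\e\to0^+}\mathcal{K}^{int}(u_\e,\Omega)\ge\mathcal{K}^{int}(u,\Omega)=\textnormal{Per}_s^{int}(E,\Omega)=\mathcal{F}^{int}(u,\Omega),
\]
which is $(\textnormal{i})$. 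I would stress that part~(c) does \emph{not} follow formally from (a) and (b), since $\Gamma$-convergence is not stable under sums; it holds here precisely because the recovery sequence of $(\textnormal{ii})$ is common to (a) and (b) and the $\liminf$ inequality for the sum is again a single application of Fatou's lemma to $\mathcal{K}^{int}+\mathcal{K}^{ext}$.

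I do not expect any step to be a serious obstacle in this regime: the argument is, in essence, the lower semicontinuity of the fractional interaction energy under $L^1_{\textnormal{loc}}$-convergence (Fatou's lemma), together with the fact that the blow-up weight $\e^{-2s}$ on the potential confines the limit to $\{\pm1\}$-valued functions. The two points that will require some care are the bookkeeping that keeps $\iint_{\Omega\times\Omega}$ separated from $\iint_{\Omega\times\mathcal{C}\Omega}$, so that the interior and exterior pieces of $\textnormal{Per}_s$ in~\eqref{iwdjc:01}--\eqref{iwdjc:02} are matched correctly, and the reading of part~(b): since $\mathcal{K}^{ext}(u,\Omega)$ depends on $u$ also outside $\Omega$, one has to work — as is natural for the variational problems this functional is built for — with competitors that are fixed, or at least $\{\pm1\}$-valued, on $\mathcal{C}\Omega$, so that $\mathcal{K}^{ext}(u,\Omega)$ really is a function of $E$ alone. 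The genuinely hard phenomena — the appearance of the Modica--Mortola constant $c_*$ and of the \emph{classical} perimeter — occur only for $s\in[1/2,1)$, where $\chi_E-\chi_{\mathcal{C}E}$ has infinite interaction energy and a rescaled optimal one-dimensional profile has to be constructed; that range is outside the present statement and is treated in~\cite{SV}.
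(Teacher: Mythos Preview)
Your proposal is correct and follows essentially the same route as the paper's proof: the key identity $\mathcal{F}^{int}_\e(u,\Omega)=\mathcal{K}^{int}(u,\Omega)=\mathcal{F}^{int}(u,\Omega)$ for $u_{|\Omega}=\chi_E-\chi_{\mathcal{C}E}$, the constant recovery sequence $u_\e:=u$ for the $\limsup$, and Fatou's lemma (first on the potential to force $u\in\{\pm1\}$, then on the interaction integrand) for the $\liminf$. Your additional remarks---that part~(c) does not follow formally from (a) and (b) but holds because the recovery sequence is common, and that for $\mathcal{K}^{ext}$ one should regard the exterior datum as fixed---are useful clarifications not spelled out in the paper's sketch.
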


\begin{proof}[Sketch of the proof]
	Since the same strategy works for all three cases, let us deal with point (a). One observes that
	\begin{equation}\label{equiv}
	\mathcal{F}^{int}_\e(u,\Omega)=\mathcal{K}^{int}(u,\Omega)=\mathcal{F}^{int}(u,\Omega) \qquad \text{if}\,\,\,u_{|\Omega}=\chi_E-\chi_{\mathcal{C}E}.
	\end{equation}
	First, we want to prove point (i) in Definition~\ref{def_gamma}. For every sequence~$u_\e$ converging to $u$ in $X$, we can assume that
	\[
	\liminf_{\e\to0^+}\mathcal{F}_\e^{int}(u_\e,\Omega)=l<+\infty,
	\]
	otherwise the claim is trivial. Taking a subsequence~$u_{\e_k}$ attaining the above limit and a further subsequence (that we still name $u_{\e_k}$) converging almost everywhere to~$u$, we deduce that
	\[
	l=\lim_{k\to\infty}\mathcal{F}_{\e_k}^{int}(u_{\e_k},\Omega)\geq\lim_{k\to\infty}\frac{1}{2\e^{2s}_k}\int_\Omega W(u_{\e_k}(x))\,dx.
	\] 
	Therefore, the integral of $W(u)$ over $\Omega$ is zero at the limit and we deduce that $u(x)\in\{-1;1\}$ for almost every $x\in\Omega$, that is $u_{|\Omega}=\chi_E-\chi_{\mathcal{C}E}$ for some set $E\subset\R^n$. Now, by Fatou's lemma and~\eqref{equiv} we have
	\[
	\liminf_{\e\to0^+} \mathcal{F}^{int}_\e(u_\e,\Omega)\geq \mathcal{F}^{int}(u,\Omega),
	\]
	which is the desired inequality.
	
Then, to prove point (ii) in Definition~\ref{def_gamma}, we assume that~$u_{|\Omega}=\chi_E-\chi_{\mathcal{C}E}$ for some set $E\subset\R^n$, otherwise the claim is trivial. Then, by taking a constant sequence~$u_\e:=u$ and using~\eqref{equiv}, it follows that
	\[
	\limsup_{\e\to0^+} \mathcal{F}^{int}_\e(u_\e,\Omega)\leq \mathcal{F}^{int}(u,\Omega),
	\]
	concluding the proof of point (a).
\end{proof}

As a consequence of Theorem~\ref{thm_nonlocal_gamma}, we can obtain a new
result about the $\Gamma$-convergence to a nonlocal capillarity
functional. Indeed,
a fractional analogue of the capillarity functional $G$ defined
in~\eqref{local_capillarity} is studied in~\cite{MaggiV, dmv2}.
For a bounded container $\Omega\subset\R^n$ and for every set $E\subset\Omega$, one
takes into account the energy functional
\begin{equation*}
\mathcal{E}_s(E,\Omega):=I_s(E,\Omega\setminus E)+\sigma I_s(E,\mathcal{C}\Omega),
\end{equation*}
where $\sigma$ is the relative adhesion coefficient that we introduced for the classical capillarity energy --- see~\eqref{local_capillarity}. For every $s\in(0,1/2)$ we define the energy
\begin{equation*}
\mathcal{J}_{\e,s}(u,\Omega):=\mathcal{K}^{int}(u,\Omega)+\sigma\mathcal{K}^{ext}(u,\Omega)+\e^{-2s}\int_\Omega W(u(x))\,dx.
\end{equation*}
Then, we have the following result for the $\Gamma$-convergence of the energy $\mathcal{J}_{\e,s}$.
\begin{corollary}
	Let $s\in(0,1/2)$ and $\Omega$ be a bounded domain. Then, $\mathcal{J}_{\e,s}$ converges in the $\Gamma$-sense to the fractional capillarity energy defined as
	\begin{equation*}
	\mathcal{J}_s(E,\Omega):=
	\begin{cases}
	\begin{aligned}	
	&\mathcal{E}_s(E,\Omega) \qquad &\text{if}\,\,u_{|\Omega}=\chi_E-\chi_{\mathcal{C}E},
	\\
	&+\infty\qquad&\text{otherwise}.
	\end{aligned}
	\end{cases}
	\end{equation*}
\end{corollary}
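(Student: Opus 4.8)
The plan is to run the same scheme as in the proof of Theorem~\ref{thm_nonlocal_gamma}, the only genuinely new feature being that the relative adhesion coefficient~$\sigma$ is allowed to be negative, so that the exterior interaction enters~$\mathcal{J}_{\e,s}$ with an unfavourable sign and cannot be controlled by lower semicontinuity alone. The starting point is the exact analogue of~\eqref{equiv}: if~$u_{|\Omega}=\chi_E-\chi_{\mathcal{C}E}$, then~$W(u)$ vanishes identically on~$\Omega$, so that, for every~$\e>0$,
\[
\mathcal{J}_{\e,s}(u,\Omega)=\mathcal{K}^{int}(u,\Omega)+\sigma\,\mathcal{K}^{ext}(u,\Omega)=\mathcal{E}_s(E,\Omega),
\]
where the two interaction terms reconstruct, respectively, the contributions~$I_s(E,\Omega\setminus E)$ and~$I_s(E,\mathcal{C}\Omega)$ to~$\mathcal{E}_s(E,\Omega)$ exactly as in~\eqref{equiv}. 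This identity immediately yields condition~(ii) of Definition~\ref{def_gamma}: when~$u_{|\Omega}=\chi_E-\chi_{\mathcal{C}E}$ one takes the constant sequence~$u_\e:=u$, for which~$\mathcal{J}_{\e,s}(u_\e,\Omega)=\mathcal{E}_s(E,\Omega)=\mathcal{J}_s(E,\Omega)$ for every~$\e$, whereas when~$u_{|\Omega}$ is not a $\pm1$ configuration the target is~$+\infty$ and there is nothing to prove.

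For the~$\Gamma$-liminf inequality~(i), I would take a sequence~$u_\e\to u$ in~$X$, assume~$\liminf_{\e\to0^+}\mathcal{J}_{\e,s}(u_\e,\Omega)<+\infty$ (otherwise~(i) is trivial), and pass to a subsequence realising this liminf and converging a.e.\ in~$\R^n$, recalling that~$\norm{u_\e}_{L^\infty(\R^n)}\le1$. The observation that replaces the lower semicontinuity of the exterior term is that, since~$s\in(0,1/2)$ and~$\Omega$ is bounded, the kernel~$\abs{x-y}^{-n-2s}$ is integrable on~$\Omega\times\mathcal{C}\Omega$ (equivalently,~$\Omega$ has finite fractional $s$-perimeter); hence the integrand defining~$\mathcal{K}^{ext}(u_\e,\Omega)$ is dominated by~$4\,\abs{x-y}^{-n-2s}\in L^1(\Omega\times\mathcal{C}\Omega)$, and dominated convergence gives
\[
\lim_{\e\to0^+}\mathcal{K}^{ext}(u_\e,\Omega)=\mathcal{K}^{ext}(u,\Omega),
\]
so that~$\sigma\,\mathcal{K}^{ext}$ is continuous along the sequence, regardless of the sign of~$\sigma$. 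The remaining two terms are handled exactly as in the proof of Theorem~\ref{thm_nonlocal_gamma}: since~$\mathcal{K}^{int}\ge0$, one has~$\e^{-2s}\int_\Omega W(u_\e(x))\,dx\le\mathcal{J}_{\e,s}(u_\e,\Omega)-\sigma\,\mathcal{K}^{ext}(u_\e,\Omega)$, and the right-hand side stays bounded along the subsequence, whence~$\int_\Omega W(u_\e(x))\,dx\to0$ and therefore (by Fatou and the continuity of~$W$ on~$[-1,1]$)~$\int_\Omega W(u(x))\,dx=0$, i.e.\ $u_{|\Omega}=\chi_E-\chi_{\mathcal{C}E}$ for some set~$E$; meanwhile Fatou's lemma applied to the nonnegative integrand of~$\mathcal{K}^{int}$ yields~$\liminf_{\e\to0^+}\mathcal{K}^{int}(u_\e,\Omega)\ge\mathcal{K}^{int}(u,\Omega)$. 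Combining the three contributions,
\[
\liminf_{\e\to0^+}\mathcal{J}_{\e,s}(u_\e,\Omega)\ge\mathcal{K}^{int}(u,\Omega)+\sigma\,\mathcal{K}^{ext}(u,\Omega)=\mathcal{E}_s(E,\Omega)=\mathcal{J}_s(E,\Omega),
\]
which is~(i); if instead~$u_{|\Omega}$ is not a $\pm1$ configuration, then~$\int_\Omega W(u(x))\,dx>0$ together with~$\e^{-2s}\to+\infty$ forces the liminf to be~$+\infty$, so~(i) holds in that case too.

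The one step that goes beyond Theorem~\ref{thm_nonlocal_gamma} is precisely this treatment of~$\sigma<0$ in the liminf inequality: Fatou cannot be applied to~$\sigma\,\mathcal{K}^{ext}$, and the device that saves the argument is the continuity of~$\mathcal{K}^{ext}(\cdot,\Omega)$ on~$\{\norm{u}_{L^\infty(\R^n)}\le1\}$ with respect to convergence in~$X$, which is available exactly because~$s<1/2$ makes~$\abs{x-y}^{-n-2s}$ integrable on~$\Omega\times\mathcal{C}\Omega$. I expect this to be the main --- indeed essentially the only nonroutine --- obstacle; the weight~$\e^{-2s}$ on the potential, the reduction to $\pm1$ configurations, the Fatou bound on the interior term, and the use of a constant recovery sequence all follow verbatim the scheme given for Theorem~\ref{thm_nonlocal_gamma}.
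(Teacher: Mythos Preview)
Your approach differs from the paper's. The paper gives a one-sentence proof --- it invokes Theorem~\ref{thm_nonlocal_gamma} together with the subadditivity of the $\limsup$ and the superadditivity of the $\liminf$ --- whereas you rerun the scheme of Theorem~\ref{thm_nonlocal_gamma} directly and treat the exterior term by dominated convergence so as to cover $\sigma<0$. This is a genuine gain in care: read literally, the paper's superadditivity argument only yields the $\liminf$ inequality when $\sigma\ge0$, since for $\sigma<0$ one would need $\limsup_\e\mathcal{K}^{ext}(u_\e,\Omega)\le\mathcal{K}^{ext}(u,\Omega)$ along \emph{every} sequence, which Theorem~\ref{thm_nonlocal_gamma} does not provide. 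Your continuity argument for $\mathcal{K}^{ext}(\cdot,\Omega)$ fills exactly this gap.

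There is, however, one inaccurate step. The claim that $\abs{x-y}^{-n-2s}$ is integrable on $\Omega\times\mathcal{C}\Omega$ for \emph{every} bounded domain when $s<1/2$ is not true in general: this integrability is precisely the finiteness of $I_s(\Omega,\mathcal{C}\Omega)=\textnormal{Per}_s(\Omega,\R^n)$, which can fail for bounded open sets with sufficiently irregular boundary (planar domains bounded by a Koch-type curve, for instance, have infinite $s$-perimeter for $s$ close to $1/2$). The claim \emph{is} correct once $\partial\Omega$ is, say, Lipschitz --- then $\int_\Omega d(x,\partial\Omega)^{-2s}\,dx<\infty$ for $2s<1$ gives the bound you need --- and this is almost certainly the implicit standing assumption in the paper as well; but you should either add that hypothesis or replace ``$\Omega$ bounded'' by ``$\textnormal{Per}_s(\Omega,\R^n)<\infty$'' at the point where you invoke dominated convergence. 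With that caveat, the rest of your argument (Fatou on $\mathcal{K}^{int}$, the forcing of $u_{|\Omega}\in\{-1,1\}$ via the diverging potential weight, and the constant recovery sequence) is correct and matches the mechanism behind Theorem~\ref{thm_nonlocal_gamma}.
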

\begin{proof}
	The result follows from Theorem~\ref{thm_nonlocal_gamma}, the subadditivity of the $\limsup$, and the superadditivity of the $\liminf$.
\end{proof}

Now we focus instead on the case $s\in[1/2,1)$. In this setting, and using the tools in~\cite{SV}, we can prove a $\Gamma$-convergence result for the functionals~$\mathcal{F}^{int}_\e$ and~$\mathcal{F}_\e$ which is similar to, but slightly stronger than, the claim in~\cite[Theorem 1.4]{SV}. We state it in the following theorem and we then sketch its proof, which is obtained by adapting the arguments in~\cite{SV}.

\begin{theorem}  
	\label{thm_local_gamma}
	Let $s\in[1/2,1)$ and $\Omega\subset\R^n$ be a bounded domain with Lipschitz boundary. Then, for any $u\in X$,
	\begin{itemize}
		\item[(i)] for every $u_\e$ that converges to $u$ in $X$,
	\[
	\liminf_{\e\to0^+}\mathcal{F}^{int}_\e(u_\e,\Omega)\geq \mathcal{F}(u,\Omega);
	\]
		\item[(ii)] there exists $u_\e$ that converges to $u$ in $X$ and such that 
		\[
		\limsup_{\e\to0^+}\mathcal{F}_\e(u_\e,\Omega)\leq \mathcal{F}(u,\Omega).
		\]
	\end{itemize}
\end{theorem}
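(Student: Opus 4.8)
The plan is to follow the proof of~\cite[Theorem~1.4]{SV}, isolating the only point in which the present statement differs from the one therein: the $\liminf$ inequality~(i) is claimed for the interior energy~$\mathcal{F}^{int}_\e$ alone, while~(ii), for the full $\mathcal{F}_\e$, is exactly as in~\cite{SV}. Since $\mathcal{F}^{int}_\e\le\mathcal{F}_\e$ and $\mathcal{F}^{ext}_\e\ge0$, the conjunction of~(i) and~(ii) is formally stronger than the $\Gamma$-convergence $\mathcal{F}_\e\to\mathcal{F}$ of~\cite{SV}; all that has to be verified is that the lower bound in~\cite{SV} is obtained through a localization on balls compactly contained in~$\Omega$, so that only interior interactions enter it.

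For~(i), I would let $u_\e\to u$ in $X$ with $\ell:=\liminf_{\e\to0^+}\mathcal{F}^{int}_\e(u_\e,\Omega)<+\infty$ (otherwise there is nothing to prove), and pass to a subsequence realizing~$\ell$ and converging to~$u$ a.e.\ in~$\Omega$. The coefficient of $\int_\Omega W(u_\e)$ in $\mathcal{F}^{int}_\e$ (which is $\tfrac12\e^{-1}$ for $s\in(1/2,1)$ and $\tfrac12\abs{\e\log\e}^{-1}$ for $s=1/2$) diverges, so $\int_\Omega W(u_\e)\to0$ and Fatou's Lemma yields $\int_\Omega W(u)=0$, i.e.\ $u_{|\Omega}=\chi_E-\chi_{\mathcal{C}E}$ for some $E$; in particular $\mathcal{F}(u,\Omega)=+\infty$ unless $u$ is of this form, so it remains to show $\ell\ge c_*\,\textnormal{Per}(E,\Omega)$. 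For this I would fix finitely many pairwise disjoint balls $B_{r_i}(x_i)\subset\subset\Omega$ with $x_i\in\partial^*E$, use the superadditivity of $A\mapsto\iint_{A\times A}(\cdots)$ and the additivity of $A\mapsto\int_A W(u_\e)$ to bound $\mathcal{F}^{int}_\e(u_\e,\Omega)$ below by the sum over $i$ of the corresponding energies on $B_{r_i}(x_i)$, and rescale each ball by $y\mapsto x_i+\e y$: since $\mathcal{K}^{int}$ then picks up the factor $\e^{n-2s}$ and $\e^{2s-1}\mathcal{K}^{int}$ scales like a perimeter, each term becomes $r_i^{n-1}$ times a transition energy on a ball of radius $r_i/\e\to\infty$, which by lower semicontinuity and the variational definition of $c_*$ is at least $c_*\,r_i^{n-1}(1+o(1))$. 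Letting the family of balls exhaust $\partial^*E\cap\Omega$ in $\mathcal{H}^{n-1}$-measure gives $\ell\ge c_*\,\mathcal{H}^{n-1}(\partial^*E\cap\Omega)=c_*\,\textnormal{Per}(E,\Omega)$. Since only $\mathcal{K}^{int}$ entered, this proves~(i) in the stated form.

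For~(ii), since the $\Gamma$-$\limsup$ functional is $L^1_\textnormal{loc}$-lower semicontinuous and $\mathcal{F}(\cdot,\Omega)$ depends only on the restriction to~$\Omega$, I would first reduce — approximating $E$ in perimeter and modifying $u$ only in $\mathcal{C}\Omega$, the Lipschitz regularity of $\partial\Omega$ being used here so that smooth sets meeting $\partial\Omega$ transversally are dense — to the case in which $\partial E$ is smooth and transversal to $\partial\Omega$ and $u$ is smooth near $\partial\Omega$. Then I would take the recovery sequence of~\cite{SV}: with $\phi_0$ the optimal one-dimensional profile (constant $\pm1$ outside a compact interval) and $d$ the signed distance to $\partial E$, put $u_\e:=\phi_0(d/\e)$ in a fixed tubular neighbourhood of $\partial E$, $u_\e$ equal to the relevant pure phase on the rest of $\Omega$, interpolate from $u_\e$ to the trace of $u$ across a boundary layer $\{\textnormal{dist}(\cdot,\partial\Omega)<\omega_\e\}$ with $\omega_\e\to0$ chosen as in~\cite{SV}, and set $u_\e:=u$ in $\mathcal{C}\Omega$, so that $u_\e\to u$ in $L^1_\textnormal{loc}(\R^n)$. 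The coarea computation along $\partial E$ gives $\mathcal{F}^{int}_\e(u_\e,\Omega)\to c_*\,\textnormal{Per}(E,\Omega)$ (the potential contribution being already incorporated in $c_*$), while the layer is designed so that $\mathcal{K}^{ext}(u_\e,\Omega)=o(\e^{1-2s})$ for $s\in(1/2,1)$ and $o(\abs{\log\e})$ for $s=1/2$, whence $\mathcal{F}^{ext}_\e(u_\e,\Omega)\to0$; adding the two, $\limsup_{\e\to0^+}\mathcal{F}_\e(u_\e,\Omega)\le c_*\,\textnormal{Per}(E,\Omega)$.

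I expect the main obstacle to be precisely this last estimate on $\mathcal{K}^{ext}(u_\e,\Omega)$ in~(ii): the exterior interaction created by the boundary layer is mildly divergent as $\e\to0^+$, and it is the competition of this divergence with the vanishing weights $\e^{2s-1}$ and $\abs{\log\e}^{-1}$ that both explains the admissible range $s\in[1/2,1)$ and fixes the correct width~$\omega_\e$; controlling it requires splitting the cross-integral over $\Omega\times\mathcal{C}\Omega$ into its near- and far-diagonal parts and balancing each against the weight, which is the delicate step of~\cite{SV} that I would import essentially verbatim. By contrast, the reduction to smooth $E$ and the blow-up/localization in~(i) are by now routine, and for $s=1/2$ the only additional care lies in tracking the logarithmic weights — consistent, as already observed in the text, with assumption~\eqref{kappa} for $k=1$.
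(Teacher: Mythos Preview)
Your proposal is correct and follows essentially the same approach as the paper's own sketch: for~(i) you localize on disjoint balls inside~$\Omega$, use superadditivity of the interior energy, and invoke the blow-up lower bound on each ball (this is exactly~\cite[Proposition~4.3]{SV}, which the paper cites and which is already stated for~$\mathcal{F}^{int}_\e$), while for~(ii) you import the recovery sequence of~\cite[Proposition~4.6]{SV}. The only omission in your sketch is the compactness step---the paper explicitly points out that the finiteness of~$\ell$ forces~$E$ to have finite perimeter in~$\Omega$ (via~\cite[Proposition~3.3]{SV}, whose proof only uses the interior bound), which is needed before invoking rectifiability of~$\partial^*E$ and the density covering; you should mention this, and also note a missing factor~$\omega_{n-1}$ in your per-ball lower bound so that it matches the Hausdorff-measure exhaustion.
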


\begin{proof}[Sketch of the proof]
We start with the proof of point (i). We recall that in~\cite{SV} 
it is proved that,
	for every $u_\e$ converging to $u$ in $X$,
	\begin{equation}\label{6TRA}
	\liminf_{\e\to0^+}\mathcal{F}_\e(u_\e,\Omega)\geq \mathcal{F}(u,\Omega).
	\end{equation}
	Actually, the proof in~\cite{SV} 
can be adapted to show point (i) in Theorem~\ref{thm_local_gamma},
which is slightly stronger than~\eqref{6TRA}, as $\mathcal{F}_\e(u_\e,\Omega)\geq\mathcal{F}_\e^{int}(u_\e,\Omega)$. 
	
To prove point~(i), we can assume that 
\begin{equation}\label{hp_liminf}
	\liminf_{\e\to0^+}\mathcal{F}^{int}_\e(u_\e,\Omega)=l<+\infty,
\end{equation}
	otherwise the claim in~(i) is trivial. {F}rom \eqref{hp_liminf}, it follows the existence of a subsequence of $u_\e$, that we still name $u_\e$, such that $u_\e$ converges to $\chi_E-\chi_{\mathcal{C}E}$ in~$L^1(\Omega)$ for some set $E\subset\R^n$ with finite perimeter in $\Omega$. 	
	This is proved in~\cite[Proposition 3.3]{SV}, under the hypothesis that the~$\liminf$ of~$\mathcal{F}_\e(u_\e,\Omega)$ is finite. However, one can weaken this hypothesis
	and assume~\eqref{hp_liminf} instead, from which one can deduce that~$\mathcal{F}^{int}_\e(u_\e,\Omega)$ is uniformly bounded, by eventually passing to a subsequence, and carry out the whole proof.
	
	Since $E$ has finite perimeter in $\Omega$, by classical results in Geometric Measure Theory --- see~\cite[Theorem 4.4]{G} --- we have
	\[
	\textnormal{Per}(E,\Omega)=\mathcal{H}^{n-1}\left(\partial^*E\cap\Omega\right),
	\]
	where $\partial^*E$ is the ``reduced boundary'' of the set $E$. We refer again to~\cite{G,MAGGI} for the theory of sets with finite perimeter and in particular for the definition of the reduced boundary. 
	Then, by the rectifiability of the reduced boundary, for every $\alpha>0$ we can find a collection of balls $B_j$ with radii $\rho_j>0$, whose smallness depends from~$\alpha$, such that
	\[
	\textnormal{Per}(E,\Omega)\leq\alpha+\omega_{n-1}\sum_{j=0}^{+\infty}\rho_j^{n-1},
	\]
	where $\omega_{n-1}$ is the measure of the $(n-1)$-dimensional unit ball.
	By Vitali's covering theorem we can assume that these balls are disjoint, hence
	\[
	\mathcal{F}^{int}_\e(u_\e,\Omega)\geq\sum_{j=0}^{+\infty}\mathcal{F}^{int}_\e(u_\e,B_j).
	\]
	Now, the $\liminf$ of the functional $\mathcal{F}^{int}_\e$ can be explicitly estimated in case the domain is a ball. Indeed, we can use Proposition 4.3 in~\cite{SV} that states\footnote{Observe that Proposition 4.3 in~\cite{SV} is stated for $\mathcal{F}^{int}_\e$ as in~\eqref{4.3}, not for $\mathcal{F}_\e$.
	} that
	\begin{equation}\label{4.3}
	\liminf_{\e\to0^+}\mathcal{F}^{int}_\e(u_\e,B_\rho)\geq\omega_{n-1}\rho^{n-1}\left(c_*-\eta(\alpha)\right),
	\end{equation}
	with $\eta(\alpha)\to0^+$ as $\alpha\to0^+$ and $c^*$ being the constant appearing in the definition of~$\mathcal{F}$. Combining the above results we deduce that	
	\[
	\liminf_{\e\to0^+}\mathcal{F}^{int}_\e(u_\e,\Omega)
	\geq \omega_{n-1}\left(c_*-\eta(\alpha)\right)\sum_{j=0}^{+\infty}\rho_j^{n-1}
	\geq\left(c_*-\eta(\alpha)\right)\left(\textnormal{Per}(E,\Omega)-\alpha\right),
	\]
	and letting $\alpha\to0^+$ we prove point (i).
	\medskip
	
The proof of point (ii) relies on the recovery sequence constructed
in Proposition~4.6 of~\cite{SV}.
\end{proof}

{F}rom Theorem~\ref{thm_local_gamma} we easily observe that the two functionals $\mathcal{F}^{int}_\e$ and $\mathcal{F}_\e$ attain the same $\Gamma$-limit when $s\in[1/2,1)$. Indeed, since $\mathcal{F}^{int}_\e(u,\Omega)\leq\mathcal{F}_\e(u,\Omega)$ for every function $u$ and domain $\Omega$, from Theorem~\ref{thm_local_gamma} we deduce that for any $u\in X$
\begin{itemize}
	\item[(iii)] for every $u_\e$ that converges to $u$ in $X$,
	\[
	\liminf_{\e\to0^+}\mathcal{F}_\e(u_\e,\Omega)\geq \mathcal{F}(u,\Omega);
	\]
	\item[(iv)] there exists $u_\e$ that converges to $u$ in $X$ and such that 
	\[
	\limsup_{\e\to0^+}\mathcal{F}^{int}_\e(u_\e,\Omega)\leq \mathcal{F}(u,\Omega).
	\]
\end{itemize}
That is, combining (i), (ii), (iii), and (iv), both $\mathcal{F}_\e^{int}$ and $\mathcal{F}_\e$ converge to the $\Gamma$-limit~$\mathcal{F}$, that for $s\in[1/2,1)$ is defined as
\begin{equation*}
\mathcal{F}(u,\Omega):=
\begin{cases}
\begin{aligned}
&c_*\, \textnormal{Per}(E,\Omega) \qquad &\text{if}\,\,\,u_{|\Omega}=\chi_E-\chi_{\mathcal{C}E},
\\
&+\infty\qquad&\text{otherwise}.
\end{aligned}
\end{cases}
\end{equation*}
\begin{remark}\label{rmk_boundary}
	The phenomena highlighted in~\cite{SV}
	emphasizes a structural
	difference between the strongly nonlocal regime, i.e., when $s\in(0,1/2)$,
	and the weakly nonlocal one in which $s\in[1/2,1)$. 

This difference also affects the different behavior of the interior
and exterior contributions of the energy functional in the $\Gamma$-limit.
Indeed, in the case~$s\in(0,1/2)$
Theorem~\ref{thm_nonlocal_gamma} shows that
both the interior and the exterior components
of the fractional
phase coexistence functional $\mathcal{F}_\e$ 
converge to two different and nontrivial $\Gamma$-limits,
whose sum is the full fractional perimeter of a set $E$ in a domain $\Omega$.

On the other hand, when $s\in[1/2,1)$,
the nonlocal interactions on $\Omega\times\mathcal{C}\Omega$
in the functional $\mathcal{F}_\e$ disappear in the $\Gamma$-limit.
As a matter of fact, since
$$ 0\le \mathcal{F}^{ext}_\e(u_\e,\Omega)=\mathcal{F}_\e(u_\e,\Omega)
-\mathcal{F}^{int}_\e(u_\e,\Omega),$$
we have that
\begin{eqnarray*}&& 0\le \limsup_{\e\to0^+}\mathcal{F}^{ext}_\e(u_\e,\Omega)=
\limsup_{\e\to0^+}\Big(\mathcal{F}_\e(u_\e,\Omega)
-\mathcal{F}^{int}_\e(u_\e,\Omega)\Big)\\&&\qquad\qquad\le
\limsup_{\e\to0^+}\mathcal{F}_\e(u_\e,\Omega)-
\liminf_{\e\to0^+}\mathcal{F}^{int}_\e(u_\e,\Omega).\end{eqnarray*}
Thus, by Theorem~\ref{thm_local_gamma} and assuming $s\in[1/2,1)$, we have that for every $u\in X$ there exists a sequence $u_\e$ converging to~$u$ in~$L^1_\textnormal{loc}(\R^n)$ such that
$$ \limsup_{\e\to0^+}\mathcal{F}^{ext}_\e(u_\e,\Omega)=0.$$
\end{remark}

\medskip

We recall that the convergence of the level sets of the minimizers
described in~\eqref{COCP}
possesses a natural nonlocal counterpart, as established in~\cite{SVzero, SVdue}.
More precisely, the statement in~\eqref{COCP} holds true
for the rescaled minimizers of~$\mathcal{F}_1$ , defined as
\[
\mathcal{F}_1(u,\Omega):=\mathcal{K}(u,\Omega)+\int_\Omega W(u(x))\,dx.
\]
The only difference with the setting in~\eqref{COCP} is that
the limit set~$E$ is now a local minimizer for the classical perimeter when~$s\in[1/2,1)$,
and a local minimizer for the nonlocal perimeter when~$s\in(0,1/2)$.

The geometric convergence proofs in~\cite{SVzero, SVdue}
also rely on energy and density estimates which can be seen as a nonlocal
counterpart of the classical ones in~\eqref{COCP2} and~\eqref{COCP3}.
More precisely, while~\eqref{COCP3} holds the same in the nonlocal
case (i.e., phases do not get lost in the measure theoretic sense),
the nonlocal counterpart of~\eqref{COCP2} takes into account different
scaling properties depending on the nonlocal exponent. Namely,
if~$u$ is a minimizer of~$\mathcal{F}_1$ in~$B_{R+1}$
with~$R>2$, then
\begin{equation}\label{8uJSnsnn2yedhhdhdh}
\mathcal{F}_1 (u,B_R)\le \begin{cases}
CR^{n-2s} & \quad{\mbox{ if }}s\in(0,1/2),\\
CR^{n-1}\log R& \quad{\mbox{ if }}s=1/2,\\
CR^{n-1} & \quad{\mbox{ if }}s\in[1/2,1),
\end{cases}\end{equation}
for some~$C>0$ depending on $n$, $s$, and $W$.

That is, comparing~\eqref{COCP2} and~\eqref{8uJSnsnn2yedhhdhdh},
the energy of the nonlocal minimizers still behaves as if the interfaces were flat,
but in this case the energy contribution in a large ball has a ``faster'' growth
due to the strongly long-range interaction arising when~$s\in(0,1/2]$.
For further details on the one-dimensional case, see also~\cite{PALAT}.
\medskip

We also mention that
the results and the techniques in~\cite{SV} have been used by the second and
the third author in~\cite{MV} to study the $\Gamma$-convergence of a nonlocal functional
arising in a model for water waves (see also~\cite{DMV} for a detailed
presentation of the physical models).
The energy functional related to this problem
depends on a parameter $s\in(0,1)$ and can be described as follows.
One defines
$$ S_s(\xi)=\frac{J_{1-s}(-i|\xi|)}{J_{s-1}(-i|\xi|)}\,|\xi|^{2s},$$
where $J_k$ is the Bessel function of the first kind of order~$k$.
In this setting, $S_s$ plays the role of a ``Fourier multiplier'',
and it has an interesting algebraic property of interpolating between
the Fourier symbol of~$-\Delta$ for small frequencies
and that of~$(-\Delta)^s$ for high frequencies --- see~\cite[Theorem~1.1]{MV} for details.
Then, the energy functional considered in~\cite{MV} on
a compactly supported function~$
u$ with values in~$[0,1]$
takes the form
\begin{equation}\label{EFAN} {\mathcal{P}}_\e(u):=\e^{2s}\,\int_{\R^n} S_s(\xi)
\,|\widehat u(\xi)|^2\,d\xi+\int_{\R^n}W(u(x))\,dx,\end{equation}
where~$\widehat u$ is the Fourier transform of~$u$, and~$W$ is a nonnegative
double-well function vanishing at $0$ and $1$.
We observe that the scaling in~\eqref{EFAN} is reminiscent of the one in~\eqref{ENE}.
Then, recalling the scaling factors in~\eqref{local_part} and~\eqref{nonlocal_part},
one defines
$$ 
\mathcal{Q}_\e(u):=
\begin{cases}
\begin{aligned}
&\e^{-2s}\,\mathcal{P}_\e(u)
\quad &\text{if}\,\,\,s\in(0,1/2),
\\
&\abs{\e\log\e}^{-1}\,\mathcal{P}_\e(u)
\quad &\text{if}\,\,\,s=1/2,
\\
&\e^{-1}\,\mathcal{P}_\e(u)
\quad &\text{if}\,\,\,s\in(1/2,1).
\end{aligned}
\end{cases}$$
As proved in~\cite{MV},
when $s\in[1/2,1)$, the $\Gamma$-limit of the functional~$\mathcal{Q}_\e$ 
turns out to be the classical perimeter (up to normalizing constants), in analogy with~\cite{SV}. On the other
hand, when $s\in(0,1/2)$, the $\Gamma$-limit of~$\mathcal{Q}_\e$
is a new nonlocal energy functional, structurally different from the fractional
Laplacian
and from the ones that have been investigated in the literature, given by
$$ \mathcal{Q}(u):=
\begin{cases}\displaystyle
\int_{\R^n} S_s(\xi)
\,|\widehat u(\xi)|^2\,d\xi\quad &\text{if $u = \chi_E$, for some~$E\subset\R^n$,}\\
0\quad &\text{otherwise}.
\end{cases}$$
We refer to~\cite[Theorem~1.3]{MV} for a precise statement about the $\Gamma$-convergence of~$\mathcal{Q}_\e$.
\medskip

In the context of nonlocal models for the phase separations of a
fluid in a container, we also mention the articles~\cite{AB,ABCP}, in which the authors study the $\Gamma$-convergence of an interaction energy with a summable kernel. In this case, the functional has a singularity which is weaker than the one in~\cite{SV}, and other techniques, different from the ones in~\cite{SV}, are used.
\medskip

We also mention that an analogue
of Conjecture~\ref{CDG} for the fractional Allen-Cahn equation $(-\Delta)^s u(x) = W'(u(x))$ opens an interesting line of research.
For this, we refer to the recent surveys~\cite{CW, PORT, DMV}.

\section{Limits in the fractional parameter~$s$}\label{JA:AKMZ09wjjjdjd}

Till now, our main focus in this note was on the limit behavior of
phase transition energy functionals for the rescaling parameter $\e$ going to zero and for a fixed nonlocal exponent~$s$.
However, it is also possible to consider limits in the fractional parameter~$s$. 
The first result that we present in this setting is a
``pointwise'' limit, for $s\to(1/2)^-$, of the interior and the exterior
contributions in the fractional perimeter of a set $E$ inside the container~$\Omega$,
that converge, respectively, to the perimeter of $E$ inside $\Omega$,
and to the perimeter of~$E$ on the boundary of $\Omega$.
Recalling the notation in~\eqref{iwdjc:01} and~\eqref{iwdjc:02}, we state it in the following theorem.

\begin{theorem}[Lombardini~\cite{L}, Maggi and Valdinoci~\cite{MaggiV}]
	\label{thm_pointwise}
	Let $s\in(0,1/2)$, $\Omega'\subset\R^n$ be an open set, and $E\subset\R^n$ with locally finite perimeter in $\Omega'$. 
	
	Then, for every open set $\Omega$ compactly contained in $\Omega'$ and with Lipschitz boundary, it holds that
	\begin{equation*}
	\begin{split}
	\lim_{s\to(1/2)^-}\left(\frac12-s\right)\textnormal{Per}_s^{int}(E,\Omega)&=\omega_{n-1}\textnormal{Per}(E,\Omega),
	\\
	\lim_{s\to(1/2)^-}\left(\frac12-s\right)\textnormal{Per}_s^{ext}(E,\Omega)&=\omega_{n-1}\mathcal{H}^{n-1}\left(\partial^* E\cap\partial\Omega\right),
	\end{split}
	\end{equation*}
	where $\omega_{n-1}$ is the measure of the $(n-1)$-dimensional unit ball and $\partial^*E$ is the reduced boundary of $E$.
\end{theorem}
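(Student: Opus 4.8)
Both limits rest on the same heuristic: as $s\to(1/2)^-$ the kernel $\abs{x-y}^{-n-2s}$ concentrates on the diagonal $\{x=y\}$, so that, after multiplication by $(1/2-s)$, the functionals $\textnormal{Per}_s^{int}(E,\Omega)$ and $\textnormal{Per}_s^{ext}(E,\Omega)$ should collapse onto codimension-one objects. In $\textnormal{Per}_s^{int}$ both interacting points lie in $\Omega$, so the relevant set is $\partial^{*}E$ inside $\Omega$; in $\textnormal{Per}_s^{ext}$ one point lies in $\Omega$ and the other in $\mathcal{C}\Omega$, so the interaction is squeezed onto $\partial\Omega$, and a closer inspection singles out $\partial^{*}E\cap\partial\Omega$. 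The plan is to prove the interior statement first, by a localized Bourgain--Brezis--Mironescu--D\'avila argument, and then to deduce the exterior one from it by a set-theoretic decomposition of $I_s$ over auxiliary domains.

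\emph{Interior part.} Since $\chi_E$ (hence its increments) takes values in $\{0,1\}$, I would rewrite
\[
\textnormal{Per}_s^{int}(E,\Omega)=I_s(E\cap\Omega,\Omega\setminus E)=\frac12\iint_{\Omega\times\Omega}\frac{\abs{\chi_E(x)-\chi_E(y)}}{\abs{x-y}^{n+2s}}\,dx\,dy.
\]
As $E$ has finite perimeter in $\Omega$ and $\Omega$ is a bounded Lipschitz open set, $\chi_E\in BV(\Omega)$, and the right-hand side is exactly the Gagliardo-type functional whose asymptotics, in the regime ``fractional order $2s\to1$'', are governed by the $BV$ form of the Bourgain--Brezis--Mironescu formula (D\'avila's theorem). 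Localizing that formula to the Lipschitz domain $\Omega$, in the manner of Ambrosio--De Philippis--Martinazzi and of Lombardini, one obtains $\lim_{s\to(1/2)^-}(1/2-s)\,\textnormal{Per}_s^{int}(E,\Omega)=c_n\,\textnormal{Per}(E,\Omega)$ for some dimensional constant $c_n$, whose value $\omega_{n-1}$ is pinned down by evaluating the limit on a half-space.

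\emph{Exterior part.} I would fix a bounded Lipschitz open set $\Omega_1$ with $\overline{\Omega}\subset\Omega_1\subset\subset\Omega'$, chosen so that $\Omega_1\setminus\overline{\Omega}$ is again a Lipschitz domain (e.g.\ a thin one-sided neighbourhood of $\partial\Omega$); note that $E$ has finite perimeter in $\Omega_1$ because $\overline{\Omega_1}$ is a compact subset of $\Omega'$. Splitting $\Omega_1$ into $\Omega$, $\Omega_1\setminus\overline{\Omega}$ and the $\mathcal{H}^n$-negligible set $\partial\Omega$, and expanding $I_s(E\cap\Omega_1,\Omega_1\setminus E)$ bilinearly, one gets
\[
\textnormal{Per}_s^{int}(E,\Omega_1)=\textnormal{Per}_s^{int}(E,\Omega)+\textnormal{Per}_s^{int}(E,\Omega_1\setminus\overline{\Omega})+R_s,
\]
where $R_s$ is the sum of the two ``cross'' interactions, between $E\cap\Omega$ and $(\Omega_1\setminus\overline{\Omega})\setminus E$, and between $E\cap(\Omega_1\setminus\overline{\Omega})$ and $\Omega\setminus E$. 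Decomposing instead $\mathcal{C}\Omega$ as $(\Omega_1\setminus\overline{\Omega})\cup\mathcal{C}\Omega_1$ up to the negligible $\partial\Omega$, one checks that $\textnormal{Per}_s^{ext}(E,\Omega)=R_s+\widetilde R_s$, where $\widetilde R_s$ collects only interactions between a point of $\Omega$ and a point of $\mathcal{C}\Omega_1$; for such pairs $\abs{x-y}\ge\textnormal{dist}(\overline{\Omega},\mathcal{C}\Omega_1)>0$ and the kernel is integrable, so $\widetilde R_s$ stays bounded as $s\to(1/2)^-$ and $(1/2-s)\,\widetilde R_s\to0$. Hence $(1/2-s)\,\textnormal{Per}_s^{ext}(E,\Omega)$ has the same limit as $(1/2-s)\big(\textnormal{Per}_s^{int}(E,\Omega_1)-\textnormal{Per}_s^{int}(E,\Omega)-\textnormal{Per}_s^{int}(E,\Omega_1\setminus\overline{\Omega})\big)$. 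Applying the interior part to the three bounded Lipschitz domains $\Omega_1$, $\Omega$, $\Omega_1\setminus\overline{\Omega}$, and using that $\abs{D\chi_E}$ is a finite Borel measure on $\overline{\Omega_1}$ with $\abs{D\chi_E}(\Omega_1)=\abs{D\chi_E}(\Omega)+\abs{D\chi_E}(\Omega_1\setminus\overline{\Omega})+\abs{D\chi_E}(\partial\Omega)$, I would conclude
\[
\lim_{s\to(1/2)^-}(1/2-s)\,\textnormal{Per}_s^{ext}(E,\Omega)=\omega_{n-1}\,\abs{D\chi_E}(\partial\Omega)=\omega_{n-1}\,\mathcal{H}^{n-1}(\partial^{*}E\cap\partial\Omega),
\]
the last equality being De Giorgi's structure theorem for sets of finite perimeter.

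\emph{Main obstacle.} The substantive point is the interior statement, i.e.\ the $BV$ Bourgain--Brezis--Mironescu asymptotics in \emph{localized} form on a merely Lipschitz domain: this needs a uniform-in-$s$ control of the part of the Gagliardo energy produced near $\partial\Omega$, and it is precisely here that the Lipschitz regularity of $\partial\Omega$ enters (through a suitable $BV$ extension/reflection of $E$ across $\partial\Omega$). Note that the extra room $\Omega\subset\subset\Omega'$ is not needed for the interior part, but it is essential for the exterior one, where one must ``see'' $E$ on the outer side of $\partial\Omega$, inside the tube $\Omega_1\setminus\overline{\Omega}$. A lesser, bookkeeping-type difficulty is to arrange that $\Omega_1\setminus\overline{\Omega}$ be Lipschitz; alternatively one may establish the exterior limit directly by a blow-up of $\partial\Omega$ at $\mathcal{H}^{n-1}$-a.e.\ point $p$, at which $\partial\Omega$ flattens to a hyperplane and $E$ to a half-space, and by checking that the density of the limiting measure equals $\omega_{n-1}$ exactly when the blow-ups of $E$ and of $\Omega$ share the same tangent plane, i.e.\ at $\mathcal{H}^{n-1}$-a.e.\ point of $\partial^{*}E\cap\partial\Omega$, and vanishes otherwise.
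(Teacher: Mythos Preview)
The paper does not supply its own proof of this theorem: it is stated with attribution to Lombardini and to Maggi--Valdinoci, and the surrounding text only remarks that the result ``has its foundations'' in the Bourgain--Brezis--Mironescu and D\'avila asymptotics for the $W^{2s,1}$-seminorm. So there is nothing to compare your argument against line by line; the paper's only hint is that BBM/D\'avila is the underlying mechanism, and your interior argument is exactly that.

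Your strategy is sound. The identification $\textnormal{Per}_s^{int}(E,\Omega)=\tfrac12[\chi_E]_{W^{2s,1}(\Omega)}$ together with D\'avila's $BV$ version of BBM on a bounded Lipschitz domain gives the first limit with the correct constant $\omega_{n-1}$. For the exterior part, your set-theoretic decomposition is correct: with $\overline{\Omega}\subset\Omega_1\subset\subset\Omega'$ one indeed has $\textnormal{Per}_s^{int}(E,\Omega_1)=\textnormal{Per}_s^{int}(E,\Omega)+\textnormal{Per}_s^{int}(E,\Omega_1\setminus\overline{\Omega})+R_s$ and $\textnormal{Per}_s^{ext}(E,\Omega)=R_s+\widetilde R_s$ with $(\tfrac12-s)\widetilde R_s\to0$, so the exterior limit reduces to a difference of three interior limits and the additivity of $|D\chi_E|$ on the partition $\Omega_1=\Omega\cup\partial\Omega\cup(\Omega_1\setminus\overline{\Omega})$. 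The only genuine technical point you flag yourself: you need the interior statement to apply to $\Omega_1\setminus\overline{\Omega}$, hence you must produce $\Omega_1$ so that this annular region is Lipschitz. For a bounded Lipschitz $\Omega$ this can be arranged (e.g.\ by taking $\Omega_1$ as the image of a bi-Lipschitz thickening in each boundary chart, glued together), but it deserves a sentence of justification rather than being dismissed as bookkeeping; your alternative blow-up route is also viable and is closer in spirit to how Maggi--Valdinoci handle the boundary term.
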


The study of the pointwise limit of the $s$-perimeter addressed in Theorem~\ref{thm_pointwise} has its foundations in the results about the limit as $s\to(1/2)^-$ of the $W^{2s,1}$-seminorm of a function. This study
was initiated by Bourgain, Brezis, and Mironescu~\cite{BBM} (see also~\cite{Davila} for optimal assumptions), establishing
that the $W^{2s,1}$-seminorm
	\[
	\abs{u}_{W^{2s,1}(\Omega)}:=\iint_{\Omega\times\Omega }\frac{\abs{u(x)-u(y)}}{\abs{x-y}^{n+2s}}\,dx\,dy
	\]
rescaled by $(1/2-s)$ converges as $s\to(1/2)^-$ to the $L^1$-norm of $\nabla u$. Some further results in this direction are obtained in~\cite{P}, also establishing the $\Gamma$-convergence of the $W^{2s,1}$-seminorm to its pointwise limit. We point out that in~\cite{BBM,Davila,P} the authors consider exponents $1\leq p<+\infty$ and more general kernels than $\abs{x-y}^{-n-2s}$, studying integrals of the type \[\iint_{\Omega\times\Omega }\frac{\abs{u(x)-u(y)}^p}{\abs{x-y}^p}\rho_i(x-y)\,dx\,dy,\] 
where $\rho_i$ is a sequence of radial mollifiers and the limit is taken for~$i\to\infty$.

We also mention the recent contributions~\cite{BN0,BN1,BN2} 
carrying out the study of both pointwise and $\Gamma$-limits	as $\delta\to0$
of a family of nonlocal and nonconvex functionals of the type
	\[
	\delta^p\iint_{\Omega\times\Omega }\frac{\varphi\left(\abs{u(x)-u(y)}/\delta\right)}{\abs{x-y}^{n+p}}\,dx\,dy,
	\]
	where $\varphi$ is a non-decreasing function satisfying some boundedness and growth assumption. 
		\medskip

A $\Gamma$-convergence counterpart of Theorem~\ref{thm_pointwise}
is provided by a result in~\cite{ADM}, which establishes the $\Gamma$-convergence
of the fractional perimeter to the classical perimeter, as the fractional
parameter $s$ converges to $1/2$:

\begin{theorem}[Ambrosio, De Philippis, and Martinazzi~\cite{ADM}]
	\label{thm_adm}
	Let $E\subset\R^n$ be a measurable set, and $\Omega$ compactly contained in $\R^n$ with Lipschitz boundary. Then,
	\begin{itemize}
		\item[(i)] for every sequences $s_i\to(1/2)^-$ and $E_i$ of measurable sets with $\chi_{E_i}\to\chi_E$ in~$L^1_\textnormal{loc}(\R^n)$, we have
		\[
		\liminf_{i\to\infty}\left(\frac12-s_i\right)\textnormal{Per}_{s_i}^{int}(E_i,\Omega)\geq\omega_{n-1}\textnormal{Per}(E,\Omega);
		\]
		\item[(ii)] for every sequence $s_i \to(1/2)^-$ there exists a sequence $E_i$ with $\chi_{E_i}\to\chi_E$ in~$L^1_\textnormal{loc}(\R^n)$, such that
		\[
		\limsup_{i\to\infty}\left(\frac12-s_i\right)\textnormal{Per}_{s_i}(E_i,\Omega)\leq\omega_{n-1}\textnormal{Per}(E,\Omega).
		\]
	\end{itemize}
\end{theorem}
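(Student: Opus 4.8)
\emph{Proof strategy.} The plan is to prove the two inequalities by rather different means. Part~(i) will be a restatement of the lower–semicontinuity (i.e. $\Gamma$-$\liminf$) half of the Bourgain--Brezis--Mironescu analysis of the $W^{2s,1}$-seminorm, recalled above; part~(ii) will be deduced from the \emph{pointwise} limit of Theorem~\ref{thm_pointwise}, after replacing $E$ by a carefully chosen nearby competitor whose reduced boundary essentially does not charge $\partial\Omega$. Throughout we may assume $\textnormal{Per}(E,\Omega)<+\infty$, since otherwise both claims are trivial (for~(ii) one takes $E_i:=E$).

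\emph{The $\Gamma$-$\liminf$, part (i).} Here I would start from the elementary identity $|\chi_F|_{W^{2s,1}(\Omega)}=2\,\textnormal{Per}^{int}_s(F,\Omega)$, valid for every measurable $F\subset\R^n$ because the Gagliardo integrand $|\chi_F(x)-\chi_F(y)|$ equals $1$ exactly on the pairs with one coordinate in $F$ and one outside. Thus part~(i) is precisely the $\Gamma$-$\liminf$ inequality for the rescaled seminorm $(\tfrac12-s)\,|\cdot|_{W^{2s,1}(\Omega)}$ as $s\to(1/2)^-$, evaluated along the characteristic functions $\chi_{E_i}$. That lower bound, in the form needed here (varying exponent \emph{and} varying functions), is contained in the Bourgain--Brezis--Mironescu/Dávila theory and in the $\Gamma$-convergence statement of Ponce~\cite{P}: writing the kernel $|x-y|^{-n-2s}\mathbf{1}_{\{|x-y|<1\}}$ as $|x-y|^{-1}\rho_s(x-y)$ with $\rho_s$ a radial profile concentrating at the origin as $s\to(1/2)^-$ places these functionals inside that framework (the contribution of $\{|x-y|\ge 1\}$, after multiplication by $\tfrac12-s$, being infinitesimal). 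One thereby obtains $\liminf_{i\to\infty}(\tfrac12-s_i)\,\textnormal{Per}^{int}_{s_i}(E_i,\Omega)\ge\omega_{n-1}\,\textnormal{Per}(E,\Omega)$, with the same dimensional constant as in the pointwise limit of Theorem~\ref{thm_pointwise}; if $\textnormal{Per}(E,\Omega)=+\infty$ the compactness part of the same theory forces the $\liminf$ to be $+\infty$, so the inequality holds unconditionally. Alternatively, one could argue as in the proof of Theorem~\ref{thm_local_gamma}(i): cover $\partial^*E\cap\Omega$ by small disjoint balls nearly exhausting $\textnormal{Per}(E,\Omega)$, use the superadditivity of $\textnormal{Per}^{int}_s$ over disjoint balls, and estimate each ball by the one-dimensional Bourgain--Brezis--Mironescu inequality after slicing.

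\emph{The $\Gamma$-$\limsup$, part (ii).} Since $\textnormal{Per}_s$ is the \emph{full} fractional perimeter, its limit also sees the boundary term $\omega_{n-1}\mathcal{H}^{n-1}(\partial^*E\cap\partial\Omega)$ appearing in Theorem~\ref{thm_pointwise}, and the point is to make this term disappear. Fix $\eta>0$. I would construct a set $E^\eta\subset\R^n$ such that: (a) $\|\chi_{E^\eta}-\chi_E\|_{L^1(B_{1/\eta})}\le\eta$; (b) $\chi_{E^\eta}=\chi_E$ a.e. on $\Omega$, so that $\textnormal{Per}(E^\eta,\Omega)=\textnormal{Per}(E,\Omega)$; (c) $E^\eta$ has locally finite perimeter in some open set $\Omega'\Supset\overline\Omega$; and (d) $\mathcal{H}^{n-1}(\partial^*E^\eta\cap\partial\Omega)\le\eta$. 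Granting this, Theorem~\ref{thm_pointwise} applied to $E^\eta$ gives $(\tfrac12-s)\,\textnormal{Per}^{int}_s(E^\eta,\Omega)\to\omega_{n-1}\textnormal{Per}(E,\Omega)$ and $(\tfrac12-s)\,\textnormal{Per}^{ext}_s(E^\eta,\Omega)\to\omega_{n-1}\mathcal{H}^{n-1}(\partial^*E^\eta\cap\partial\Omega)\le\omega_{n-1}\eta$, hence $\limsup_{s\to(1/2)^-}(\tfrac12-s)\,\textnormal{Per}_s(E^\eta,\Omega)\le\omega_{n-1}\big(\textnormal{Per}(E,\Omega)+\eta\big)$. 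For the prescribed sequence $s_i\to(1/2)^-$ one then extracts a diagonal sequence, choosing $\eta_k\downarrow 0$ (with the corresponding competitors $E^{\eta_k}$) and increasing indices $i_k$ so that $(\tfrac12-s_i)\,\textnormal{Per}_{s_i}(E^{\eta_k},\Omega)\le\omega_{n-1}\big(\textnormal{Per}(E,\Omega)+2\eta_k\big)$ for $i_k\le i<i_{k+1}$, and setting $E_i:=E^{\eta_k}$ there. This yields $\chi_{E_i}\to\chi_E$ in $L^1_{\textnormal{loc}}(\R^n)$ and $\limsup_i(\tfrac12-s_i)\,\textnormal{Per}_{s_i}(E_i,\Omega)\le\omega_{n-1}\textnormal{Per}(E,\Omega)$, which is~(ii).

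\emph{Construction of $E^\eta$, and the main difficulty.} Since $\partial\Omega$ is Lipschitz, $\chi_E$ has a well-defined trace $\chi_{E'}\in L^1(\partial\Omega;\mathcal{H}^{n-1})$, still $\{0,1\}$-valued, and near $\partial\Omega$ the domain has a collar structure: with $d$ the signed distance to $\partial\Omega$ (positive inside $\Omega$), for small $\delta$ the slab $\{-\delta<d<\delta\}$ fibers over $\partial\Omega$ (locally $\partial\Omega$ is a Lipschitz graph and the fibration is transverse to it). Choosing a set $\widetilde E'\subset\partial\Omega$ with relatively smooth boundary and $\|\chi_{\widetilde E'}-\chi_{E'}\|_{L^1(\partial\Omega)}\le\eta$, I would define $E^\eta$ to coincide with $E$ on $\{d\ge0\}=\overline\Omega$ and on $\{d\le-\delta\}$, and on the outer collar $\{-\delta<d<0\}$ with the cylinder over $\widetilde E'$ along the fibers. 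Then $E^\eta=E$ on $\Omega$ gives~(b); the modification has measure $O(\delta)$, giving~(a) for $\delta$ small; the one-sided traces of $E^\eta$ on $\partial\Omega$ are $\chi_{E'}$ from inside and $\chi_{\widetilde E'}$ from outside, so $\partial^*E^\eta\cap\partial\Omega=E'\triangle\widetilde E'$ up to $\mathcal{H}^{n-1}$-null sets, giving~(d); and on the open set $\Omega':=\{d>-\delta/2\}\Supset\overline\Omega$ the set $E^\eta$ is alternately $E$ (of finite perimeter in $\Omega$, with no boundary charge by~(d)) and a smooth cylinder, hence of locally finite perimeter there, giving~(c). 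The step I expect to be the main obstacle is exactly this construction: one must produce a competitor that is $L^1$-close to $E$, \emph{does not increase} the interior perimeter $\textnormal{Per}(\cdot,\Omega)$, has locally finite perimeter in a neighbourhood of $\overline\Omega$ (so that Theorem~\ref{thm_pointwise} applies), and whose reduced boundary essentially avoids $\partial\Omega$, so that the nonlocal exterior interactions are killed in the $\Gamma$-$\limsup$. Carrying this out rigorously under the mere Lipschitz regularity of $\partial\Omega$ — local flattening, control of $d$ and of its level sets, and a coarea selection of a width $\delta$ for which $\mathcal{H}^{n-1}\big(\partial^*E\cap\{d=-\delta\}\big)<\infty$ — is the only genuinely technical ingredient; everything else is soft.
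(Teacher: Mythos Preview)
The paper does not contain a proof of Theorem~\ref{thm_adm}: it is stated as a result of Ambrosio, De Philippis, and Martinazzi and cited from~\cite{ADM}, without any argument given here. There is therefore no ``paper's own proof'' to compare your proposal against.

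That said, your strategy is sound and in fact close in spirit to the argument in~\cite{ADM}. For part~(i), reducing to the Bourgain--Brezis--Mironescu/D\'avila/Ponce lower bound via the identity $|\chi_F|_{W^{2s,1}(\Omega)}=2\,\textnormal{Per}^{int}_s(F,\Omega)$ is exactly how the $\liminf$ inequality is obtained in the original paper. For part~(ii), the key idea --- replace $E$ by a nearby competitor whose reduced boundary does not charge $\partial\Omega$, then invoke the pointwise limit --- is again the right one; in~\cite{ADM} this is implemented by approximating $E$ in $L^1_{\textnormal{loc}}$ by smooth sets $E_k$ transversal to $\partial\Omega$ (so that $\mathcal{H}^{n-1}(\partial E_k\cap\partial\Omega)=0$) and with $\textnormal{Per}(E_k,\Omega)\to\textnormal{Per}(E,\Omega)$, followed by a diagonal argument. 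Your collar construction is a slightly different route to the same end; the transversality approach is perhaps cleaner since it sidesteps the Lipschitz fibration technicalities you flag as the main obstacle, but both lead to the same conclusion.
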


We observe that the role played by interior and exterior contributions
in Theorem~\ref{thm_adm} is similar in some aspects to the
one in Theorem~\ref{thm_local_gamma}. Indeed, from Theorem~\ref{thm_adm} and
the fact that $\textnormal{Per}_{s}(E,\Omega)\geq\textnormal{Per}^{int}_{s}(E,\Omega)$, we immediately deduce that the interior contributions in the fractional perimeter $\textnormal{Per}_s^{int}$ and the full $s$-perimeter $\textnormal{Per}_s$ attain the same $\Gamma$-limit as $s$ converges to $1/2$. In this sense, the exterior contributions in the fractional perimeter, which are given by the term $\textnormal{Per}_s^{ext}$, do not contribute\footnote{
	The counterpart of this fact for long-range phase transition models was discussed in Remark~\ref{rmk_boundary}.} 
to the $\Gamma$-limit.
\medskip

We also stress that the boundary contributions
in the limit present significant differences
when the $\Gamma$-limit is replaced by the pointwise one,
as a close comparison between Theorems~\ref{thm_pointwise}
and~\ref{thm_adm} clearly shows.
Indeed,
if one considers the pointwise convergence for $s\to(1/2)^-$, as
done in Theorem~\ref{thm_pointwise}, then the interior and the exterior
contributions of the fractional perimeter converge, respectively, to the classical
perimeter of the set inside the container and to the measure of the part of
the boundary of the set $E$ that coincides with the boundary of
the container $\Omega$.

More specifically, from Theorem~\ref{thm_adm} it follows that, in the sense of Definition \ref{def_gamma},
\begin{equation}\label{7MISS1} \Gamma-\lim_{s\to(1/2)^-}\left(\frac12-s\right)\textnormal{Per}_s^{ext}(E,\Omega)=
0,\end{equation}
but from Theorem~\ref{thm_pointwise}
it holds that, for a given set~$E$ with Lipschitz boundary,
\begin{equation}\label{7MISS2} \lim_{s\to(1/2)^-}\left(\frac12-s\right)\textnormal{Per}_s^{ext}(E,\Omega)=
\omega_{n-1}\,\textnormal{Per}(E,\partial\Omega).
\end{equation}

Even if at a first glance
the ``mismatch'' between~\eqref{7MISS1}
and~\eqref{7MISS2} can be surprising, or a bit disturbing, several arguments suggest important
differences between the $\Gamma$-limit in~\eqref{7MISS1}
and the ``pointwise'' limit in~\eqref{7MISS2}.
First of all, the $\Gamma$-convergence dealt with
in our setting relies on the $L^1$-topology,
which is ``weak'' enough to allow the approximation
of every set $E$ with 
a sequence of sets~$E_k$ such that~$(\partial E_k)\cap(\partial\Omega)=\varnothing$.
This fact makes it possible to ``optimize'' the recovery sequence
in the~$\limsup$ inequality of the $\Gamma$-convergence
setting (recall in particular point~(ii)
in Definition~\ref{def_gamma})
in such a way to ``avoid additional boundary contributions''.

Another reason for the discrepancy between the limits in~\eqref{7MISS1} and~\eqref{7MISS2} lies in the ``variational nature'' of $\Gamma$-convergence with a fixed boundary datum. 
For this, the allowed variations for the related
minimization problem are taken with compact 
support inside the domain~$\Omega$. 
In this sense, 
the $\Gamma$-limit is typically not naturally endowed with
additional boundary contributions, which would be not
compatible with the notion of local minimizers of the limit problem.

\section*{Acknowledgments}

S.D. and E.V. are supported
by the Australian
Research Council Discovery Project DP170104880
``N.E.W. Nonlocal Equations at Work''.

S.D. is supported by the
DECRA Project DE180100957 ``PDEs, free boundaries
and applications''.

P.M. is supported by the MINECO grant MTM2017-84214-C2-1-P and
is part of the Catalan research group 2017SGR1392. 

	The authors are members of INdAM-GNAMPA.
	Part of this work was carried out on the occasion of a very
pleasant visit of the second author to the University of Western Australia, which we thank for the warm hospitality. This visit was partially supported by a Ferran Sunyer i Balaguer scholarship of the Institut d'Estudis Catalans, granted to the second author.

\end{document}